\DeclareFontFamily{U}{wncy}{}
    \DeclareFontShape{U}{wncy}{m}{n}{<->wncyr10}{}
    \DeclareSymbolFont{mcy}{U}{wncy}{m}{n}
    \DeclareMathSymbol{\Sh}{\mathord}{mcy}{"58}
\begin{document}

\newcommand{\ci}[1]{_{ {}_{\scriptstyle #1}}}

\newcommand{\norm}[1]{\ensuremath{\|#1\|}}
\newcommand{\abs}[1]{\ensuremath{\vert#1\vert}}
\newcommand{\p}{\ensuremath{\partial}}
\newcommand{\pr}{\mathcal{P}}

\newcommand{\pbar}{\ensuremath{\bar{\partial}}}
\newcommand{\db}{\overline\partial}
\newcommand{\D}{\mathcal{D}}
\newcommand{\B}{\mathbb{B}}
\newcommand{\Sp}{\mathbb{S}}
\newcommand{\T}{\mathbb{T}}
\newcommand{\R}{\mathbb{R}}
\newcommand{\Z}{\mathbb{Z}}
\newcommand{\C}{\mathbb{C}}
\newcommand{\N}{\mathbb{N}}
\newcommand{\scrH}{\mathcal{H}}
\newcommand{\scrL}{\mathcal{L}}
\newcommand{\td}{\widetilde\Delta}

\newcommand{\La}{\langle }
\newcommand{\Ra}{\rangle }
\newcommand{\rk}{\operatorname{rk}}
\newcommand{\card}{\operatorname{card}}
\newcommand{\ran}{\operatorname{Ran}}
\newcommand{\osc}{\operatorname{OSC}}
\newcommand{\im}{\operatorname{Im}}
\newcommand{\re}{\operatorname{Re}}
\newcommand{\tr}{\operatorname{tr}}
\newcommand{\vf}{\varphi}
\newcommand{\f}[2]{\ensuremath{\frac{#1}{#2}}}


\newcommand{\entrylabel}[1]{\mbox{#1}\hfill}

\newenvironment{entry}
{\begin{list}{X}%
  {\renewcommand{\makelabel}{\entrylabel}%
      \setlength{\labelwidth}{55pt}%
      \setlength{\leftmargin}{\labelwidth}
      \addtolength{\leftmargin}{\labelsep}%
   }%
}%
{\end{list}}


\numberwithin{equation}{section}
\newtheorem{dfn}{Definition}[section]
\newtheorem{thm}{Theorem}[section]
\newtheorem{lm}[thm]{Lemma}
\newtheorem{cor}[thm]{Corollary}
\newtheorem{conj}[thm]{Conjecture}
\newtheorem{prob}[thm]{Problem}
\newtheorem{prop}[thm]{Proposition}
\newtheorem*{prop*}{Proposition}

\theoremstyle{remark}
\newtheorem{rem}[thm]{Remark}
\newtheorem*{rem*}{Remark}

\newtheorem{quest}[thm]{Question}

\title{Multilinear Dyadic Operators And Their Commutators In The Weighted Setting}

\author{Ishwari Kunwar}

\address{Ishwari J. Kunwar, School of Mathematics\\ Georgia Institute of Technology\\ 686 Cherry Street\\ Atlanta, GA USA 30332-0160}
\email{ikunwar3@math.gatech.edu}

\subjclass[2000]{Primary }
\keywords{Multilinear paraproducts, multilinear Haar multipliers, multilinear maximal function, multilinear $A_{\vec{P}}$ condition, dyadic BMO functions, Commutators.}

\begin{abstract} 
In this article, we investigate the boundedness properties of the multilinear dyadic paraproduct operators in the weighted setting. We also obtain weighted estimates for the multilinear Haar multipliers and their commutators with dyadic BMO functions. 
\end{abstract}
\maketitle
\setcounter{tocdepth}{1}
\tableofcontents
\section {Introduction and statement of main results}

\noindent
The purpose of this article is to investigate the boundedness properties of the multilinear dyadic operators (paraproducts and Haar multipliers) introduced in \cite{IJK} and thier commutators in the weighted setting as adopted in \cite{LOPTT}. Mainly, we use the unweighted theory of multilinear dyadic operators from \cite{IJK}, explore some useful properties of those operators, and run the machinery used in \cite{LOPTT} to obtain the corresponding weighted theory of the multilinear dyadic operators. \\

\noindent
In \cite{IJK}, the paraproduct decomposition of the pointwise product of two functions was generalized to the product of $m \geq 2$ functions that served as the motivation for defining the following multilinear dyadic operators.
\begin{itemize}
	
\item  $\displaystyle P^{\vec{\alpha}}(f_1,f_2,\ldots,f_m) = \sum_{I\in\mathcal{D}} \left(\prod_{j=1}^m f_j(I,\alpha_j)\right) h_I^{\sigma(\vec{\alpha})}, \quad \vec{\alpha} \in \{0,1\}^m \backslash\{(1,1,\ldots,1)\}.$\\

\item $\displaystyle \pi_b^{\vec{\alpha}}(f_1, f_2, \ldots, f_m) = \sum_{I \in \mathcal{D}} \La b , h_I \Ra \left(\prod_{j=1}^m f_j(I,\alpha_j)\right)  h_I^{1+\sigma(\vec{\alpha})},\quad \vec{\alpha} \in \{0,1\}^m,\, b\in BMO^d.$\\

\item  $\displaystyle T_\epsilon^{\vec{\alpha}} (f_1,f_2, \ldots,f_m) := \sum_{I\in \D} \epsilon_I \left(\prod_{j=1}^m f_j(I,\alpha_j)\right) h_I^{\sigma(\vec{\alpha})},$ \\

 $ \quad \vec{\alpha} \in =\{0,1\}^m \backslash \{(1,1,\ldots,1)\}, \, \epsilon = \{\epsilon_I\}_{I\in \D} \text{ bounded}.$\\

\item $\displaystyle [b,T_\epsilon^{\vec{\alpha}}]_i(f_1,f_2,\ldots,f_m)(x) := b(x)T_\epsilon^{\vec{\alpha}}(f_1,f_2,\ldots,f_m)(x) - T_\epsilon^{\vec{\alpha}}(f_1, \ldots, bf_i,\ldots,f_m)(x),$\\

 $ 1 \leq i \leq m$, $ \vec{\alpha} \in \{0,1\}^m \backslash\{(1,1,\ldots,1)\},\, \epsilon = \{\epsilon_I\}_{I\in \D} \text{ bounded and } b\in BMO^d.$\\
\end{itemize}

\noindent In the above definitions, $\D := \{[m2^{-k}, (m+1)2^{-k}): m,k\in \mathbb{Z}\}$ is the standard dyadic grid on $\R$ and $h_I$'s are the Haar functions defined by $h_I = \displaystyle \frac{1}{\abs{I}^{1/2}}\left(\mathsf{1}_{I_+} - \mathsf{1}_{I_-}\right),$ where $I_-$ and $I_+$ are the left and right halves of $I.$ With $\left< \;,\;\right>$ denoting the standard inner product in $L^2(\R),$ $f_i(I,0) := \left< f_i,h_I\right>$ and $\displaystyle f_i(I,1) := \La f_i, h_I^2\Ra = \frac{1}{\abs{I}} \int_I f_i,$ the average of $f_i$ over $I.$ The Haar coefficient $\La f_i, h_I\Ra$ is sometimes denoted by $\widehat{f_i}(I)$ and the average of $f_i$ over $I$ by $\La f_i \Ra_I$. For $\vec{\alpha} \in \{0,1\}^m,$ $\sigma(\vec{\alpha})$ to denotes the number of 0 components in $\vec{\alpha}$. For convenience, we will denote the set $\{0,1\}^m \backslash\{(1,1,\ldots,1)\}$ by $U_m.$\\

\noindent
The following boundedness properties of the multilinear dyadic operators were proved in \cite{IJK}:\\

\begin{itemize}
	\item 
Let $ \vec{\alpha} = (\alpha_1,\alpha_2,\ldots,\alpha_m) \in \{0,1\}^m$ and $ 1 < p_1, p_2, \ldots, p_m < \infty$ with $\sum_{j=1}^m \frac{1}{p_j} = \frac{1}{p}.$ Then
\begin{enumerate}[label = $(\alph*)$]
\item For $\vec{\alpha}  \neq (1,1,\ldots,1),$  $$ \left\Vert P^{\vec{\alpha}}(f_1,f_2,\ldots,f_m)\right\Vert_p \lesssim \prod_{j=1}^m\norm{f_j}_{p_j}.$$
\item For $\sigma(\vec{\alpha}) \leq 1,$ $$\left\Vert\pi_b^{\vec{\alpha}}(f_1,f_2,\ldots,f_m)\right\Vert_p \lesssim \norm{b}_{BMO^d}\prod_{j=1}^m\norm{f_j}_{p_j},$$ if and only if $b \in BMO^d.$\\

\noindent
For $\sigma(\vec{\alpha}) > 1,$ $$\left\Vert\pi_b^{\vec{\alpha}}(f_1,f_2,\ldots,f_m)\right\Vert_p \leq C_b \prod_{j=1}^m\norm{f_j}_{p_j},$$ if and only if $\displaystyle\sup_{I\in \D} \frac{\abs{\La b,h_I\Ra}}{\sqrt{\abs{I}}} < \infty.$
\item For $\vec{\alpha}  \neq (1,1,\ldots,1),$ $$\left\Vert T_\epsilon^{\vec{\alpha}}(f_1,f_2,\ldots,f_m)\right\Vert_p \lesssim \prod_{j=1}^m\norm{f_j}_{p_j},$$ if and only if $\norm{\epsilon}_\infty:= \displaystyle \sup_{I \in \D}\abs{\epsilon_I} < \infty.$\\
\end{enumerate}

\noindent
In each of the above cases, the operators have the corresponding weak-type boundedness from $L^{p_1} \times \cdots \times L^{p_m}\rightarrow L^{p,\infty}$ if $1\leq p_1, p_2, \ldots, p_m < \infty$.\\

\item Let $\vec{\alpha} = (\alpha_1,\alpha_2,\ldots,\alpha_m) \in U_m.$  If $b \in BMO^d \cap L^r$ for some $1<r<\infty$ and $\norm{\epsilon}_\infty := \sup_{I\in \mathcal{D}}  \abs{\epsilon_I}  < \infty,$  then each commutator $[b,T_\epsilon^{\vec{\alpha}}]_i$ is bounded from $L^{p_1}\times L^{p_2} \times \cdots\times L^{p_m}\rightarrow L^r$  for all $1<p_1,p_2, \ldots,p_m,p < \infty$ with 
$$\sum_{j=1}^m \frac{1}{p_j} = \frac{1}{p},$$
 with estimates of the form:
$$ \norm{[b,T_\epsilon^{\vec{\alpha}}]_i(f_1,f_2,\ldots,f_m)}_p \lesssim \norm{b}_{BMO^d}\prod_{j=1}^m\norm{f_j}_{p_j}.$$

\end{itemize}

\noindent
In the above results $L^p$ stands for the Lebesgue space $L^p(\R):= \left\{f:\norm{f}_p < \infty \right\} $ with $\displaystyle\norm{f}_p = \norm{f}_{L^p} := \left(\int_\R \abs{f(x)}^p dx \right)^{1/p}.$ The Weak $L^p$ space, also denoted by $L^{p,\infty}$, is the space of all functions $f$ such that
$$ \norm{f}_{L^{p,\infty}(\mathbb{R})}:= \sup_{t>0} t \, \left\vert \{x\in \mathbb{R}: f(x) >t \} \right\vert^{1/p} < \infty.$$
Moreover, $ \displaystyle \norm{b}_{BMO^d}:=\sup_{I\in \D}\frac{1}{\abs{I}}\int_I \abs{b(x) - \La b \Ra_I} \,dx < \infty, $ is the dyadic $BMO$ norm of $b.$\\

\noindent
For the theory of linear operators, we refer to \cite{Per} and \cite{Jan}.\\

\noindent
In \cite{LOPTT}, the concept of \textit{multilinear $A_{\vec{P}}$ condition} was introduced to study the boundedness properties of the multilinear Calde$\grave{\text{r}}$on-Zygmund operators and their commutators. The use of multi(sub)linear maximal function was key to obtain the weighted estimates for the optimal range. The multilinear $A_{\vec{P}}$ condition is as follows:\\

\noindent Let $ \vec{P} = (p_1, \ldots, p_m)$ and $\vec{w} = (w_1,\ldots,w_m)$, where $1\leq p_1,\ldots,p_m <\infty$ with $\frac{1}{p_1} + \dots + \frac{1}{p_m} = \frac{1}{p},$ and $w_1, \ldots, w_m$ are non-negative measurable functions.
We say that $\vec{w}$ satisfies the multilinear $A_{\vec{P}}$ condition and we write $\vec{w} \in A_{\vec{P}}$ if
$$ \sup_I \left( \frac{1}{\abs{I}}\int_I \nu_{\vec{w}}\right)^{\frac{1}{p}} \prod_{j=1}^m\left(\frac{1}{\abs{I}}\int_I w_j ^{1-p_j'}\right)^{\frac{1}{p_j'}} < \infty,$$
where $ \displaystyle \nu_{\vec{w}} := \prod_{j=1}^m w_j^{p/p_j}$, and $\displaystyle\left(\frac{1}{\abs{I}}\int_I w_j ^{1-p_j'}\right)^{\frac{1}{p_j'}}$ is understood as $\norm{w_j^{-1}}_{L^\infty(I)}$ when $p_j =1.$\\

\noindent
We define the corresponding dyadic multilinear $A_{\vec{P}}$ class, denoted by $A_{\vec{P}}^d$, by restricting the above definition to the dyadic intervals $I$.\\ 

\noindent
We now state the main results of this article, which are the dyadic analogues of the corresponing results for the multilinear Calde$\grave{\text{r}}$on-Zygmund operators obtained in \cite{LOPTT}.\\

\noindent
\textbf{Theorem:} Let $b\in BMO^d,$ and $\epsilon =(\epsilon_I)_{I\in \D}$ be bounded.  Suppose $T \in \left\{P^{\vec{\alpha}}, T_\epsilon^{\vec{\alpha}}\right\}$ with $\vec{\alpha} \in U_m,$ or $T = \pi_b^{\vec{\alpha}}$ with $\vec{\alpha} \in \{0,1\}^m.$ Let $\vec{w} = (w_1,\ldots,w_m) \in A_{\vec{P}}^d$ for $\vec{P}=(p_1, \ldots,p_m)$ with $ \frac{1}{p_1}+\dots+ \frac{1}{p_m} = \frac{1}{p}.$
\begin{enumerate}[label = $(\alph*)$]
\item If $1 < p_1, \ldots,p_m < \infty,$ then 
$$ \norm{T(f_1,\ldots,f_m)}_{L^{p}(\nu_{\vec{w}})} \leq C \prod_{j=1}^m \norm{f_j}_{L^{p_j}(w_j)}.$$
\item If $1 \leq p_1, \ldots,p_m < \infty,$ then 
$$ \norm{T(f_1,\ldots,f_m)}_{L^{p,\infty}(\nu_{\vec{w}})} \leq C \prod_{j=1}^m \norm{f_j}_{L^{p_j}(w_j)}.$$
\end{enumerate}
\noindent
\textbf{Theorem:} Let $\vec{\alpha} \in U_m $ and $\epsilon =(\epsilon_I)_{I\in \D}$ be bounded. Suppose $b\in BMO^d$ and $\vec{w} = (w_1,\ldots,w_m) \in A_{\vec{P}}^d$ for $\vec{P}=(p_1, \ldots,p_m)$ with $ \frac{1}{p_1}+\dots+ \frac{1}{p_m} = \frac{1}{p}$ and $1 < p_1, \ldots,p_m < \infty.$ Then there exists a constant $C$ such that
$$ \left\Vert{[b,T_\epsilon^{\vec{\alpha}}]_i(f_1,\ldots,f_m)}\right\Vert_{L^{p}(\nu_{\vec{w}})} \leq C \norm{b}_{BMO^d} \prod_{j=1}^m \norm{f_j}_{L^{p_j}(w_j)}.$$ \\

\noindent
In the above results $L^p(w)$ stands for the weighted Lebesgue space $L^p(\R,w):= \left\{ f:  \norm{f}_{L^p(w)} < \infty \right\}$ with $\displaystyle \norm{f}_{L^p(w)} := \left(\int_R \abs{f(x)}^p w(x) dx\right)^{1/p}.$ Moreover, the weak space $L^{p,\infty}(w)$ is the space of all functions $f$ such that
$$ \norm{f}_{L^{p,\infty(w)}}:= \sup_{t>0} t \, w \left(\{x\in \mathbb{R}: f(x) >t \}\right) ^{1/p} < \infty.$$\\

\noindent
We organize the article as follows:\\

\noindent In section 2, we present an overview of the basic terms and related facts we will be using in this article. These include the Haar system, various maximal operators, $A_p$ and multilinear $A_{\vec{P}}$ classes, and the BMO space.\\

\noindent
In section 3, we investigate boundedness properties of the multilinear dyadic operators in the weighted setting. Weighted estimates for the commutator of the multilinear Haar multiplier with a dyadic BMO function are explored in section 4.\\

\noindent\textbf{Acknowledgement:} The author would like to thank Brett Wick for suggesting him this project and providing valuable suggestions, and also for supporting him as a research assistant for the Summer semester of 2015 (NSF DMS grant \# 0955432).
 
\section{Preliminaries}

\subsection{The Haar System}
Let $\D$ denote the standard dyadic grid on $\R,$  $$\D = \{[m2^{-k}, (m+1)2^{-k}): m,k\in \mathbb{Z}\}.$$
Associated to each dyadic interval $I$ there is a Haar function $h_I$ defined by
$$h_I(x) = \frac{1}{\abs{I}^{1/2}}\left(\mathsf{1}_{I_+} - \mathsf{1}_{I_-}\right),$$
where $I_-$ and $I_+$ are the left and right halves of $I.$\\

\noindent
The collection of all Haar functions $\{h_I: I \in \D\}$ is an orthonormal basis of $L^2(\R),$ and an unconditional basis of $L^p$ for $ 1 < p < \infty.$ In fact, if a sequence $\epsilon = \{\epsilon_I\}_{I \in \mathcal{D}}$ is bounded, the operator $T_\epsilon$ defined by
$$T_\epsilon f(x) = \sum_{I \in \mathcal{D}} \epsilon_I \La f, h_I \Ra  h_I $$
is bounded in $L^p$ for all $1 < p  < \infty.$ The converse also holds. The operator $T_\epsilon$ is called the Haar multiplier with symbol $\epsilon.$ \\

\subsection{$A_p$ classes}
A weight $w$ is a non-negative locally integrable function on $\R$ such that $0< w(x) < \infty$ for almost every $x.$ Given a weight $w$ and a measurable set $E \subseteq \mathbb{R}$, the $w$-measure of $E$ is defined by
$$ w(E) =\int_E w(x) dx.$$\\
\noindent
We say that a weight $w$ belongs to the class $A_p$ for $1 < p < \infty$ if it satisfies the Muckenhoupt condition: 
$$\sup_I \left( \frac{1}{\abs{I}}\int_I {w}\right)\left( \frac{1}{\abs{I}}\int_I {w^{-\frac{1}{p-1}}}\right)^{p-1} < \infty,$$
where the supremum is taken over all intervals. The expression on the left is called the $A_p$ (Muckenhoupt) characteristic constant of $w$, and is denoted by $[w]_{A_p}$. Note that if $p'$ is the conjugate index of $p$, i.e. $\frac{1}{p} + \frac{1}{p'} = 1$, then $ 1- p' = -\frac{1}{p-1} = -\frac{p'}{p}.$ So,
\begin{eqnarray*}
[w]_{A_p} &=& \sup_I \left( \frac{1}{\abs{I}}\int_I {w}\right)\left( \frac{1}{\abs{I}}\int_I {w^{1- p'}}\right)^{1/p'}\\
&=& \sup_I \left( \frac{1}{\abs{I}}\int_I {w}\right)\left( \frac{1}{\abs{I}}\int_I {w^{-\frac{p'}{p}}}\right)^{\frac{p}{p'}}.
\end{eqnarray*}
It can be shown that $\displaystyle \lim_{p\rightarrow 1} \left( \frac{1}{\abs{I}}\int_I {w^{-\frac{1}{p-1}}}\right)^{p-1} = \norm{w^{-1}}_{L^\infty(I)}.$ This leads to the following definition of $A_1$ class:\\
A weight $w$ is called an $A_1$ weight if $$ [w]_{A_1} := \sup_I \left( \frac{1}{\abs{I}}\int_I {w}\right)\norm{w^{-1}}_{L^\infty(I)} < \infty.$$ 
Thus $[w]_{A_1}$ is the infimum of all constants $C$ such that for all intervals $I$,
$$\frac{1}{\abs{I}}\int_I {w}\leq C w(x) \quad \text{ for a.e. } x \in I.$$
The $A_p$ classes are increasing with respect to $p$, i.e. for $1\leq p_1 < p_2 < \infty,$ $$[w]_{A_{p_2}} \leq [w]_{A_{p_1}}.$$
It is natural to define the $A_\infty$ class of weights by $$A_\infty = \displaystyle \bigcup_{p>1} A_p,$$ with 
$[w]_{A_\infty} = \inf \{ [w]_{A_p}: w \in A_p\}.$ 

\noindent
For $1\leq p <\infty$, the dyadic $A_p^d$ classes are defined by the same inequalities restricted to the dyadic intervals. Moreover, $A_\infty^d = \displaystyle \bigcup_{p>1} A_p^d.$

\subsection{Multilinear $A_{\vec{P}}$ condition} We recall the multilinear $A_{\vec{P}}$ condition introduced by Lerner et al. \cite{LOPTT}.\\

\noindent Let $ \vec{P} = (p_1, \ldots, p_m)$ and $\vec{w} = (w_1,\ldots,w_m)$, where $1\leq p_1,\ldots,p_m <\infty$ and $w_1, \ldots, w_m$ are non-negative measurable functions. Let $\frac{1}{p_1} + \dots + \frac{1}{p_m} = \frac{1}{p}.$\\
We say that $\vec{w}$ satisfies the multilinear $A_{\vec{P}}$ condition and we write $\vec{w} \in A_{\vec{P}}$ if
$$ \sup_I \left( \frac{1}{\abs{I}}\int_I \nu_{\vec{w}}\right)^{\frac{1}{p}} \prod_{j=1}^m\left(\frac{1}{\abs{I}}\int_I w_j ^{1-p_j'}\right)^{\frac{1}{p_j'}} < \infty,$$
where $ \displaystyle \nu_{\vec{w}} := \prod_{j=1}^m w_j^{p/p_j}$, and $\displaystyle\left(\frac{1}{\abs{I}}\int_I w_j ^{1-p_j'}\right)^{\frac{1}{p_j'}}$ is understood as $\norm{w_j^{-1}}_{L^\infty(I)}$ when $p_j =1.$\\

\noindent
Using H$\ddot{\text{o}}$lder's inequality, it is easy to see that $$ \prod_{j=1}^m A_{p_j} \subset A_{\vec{P}}.$$ Moreover, if $\vec{w} \in A_{\vec{P}},$ $\nu_{\vec{w}} \in A_{mp}.$ We will denote the \textit{dyadic multiliner $A_{\vec{P}}$ class } by $A_{\vec{P}}^d.$

\subsection {Maximal Operators} 
Given a function $f$, the maximal function $Mf$ is defined by
$$Mf(x):= \sup_{I\ni x} \frac{1}{\abs{I}} \int_I \abs{f(t)}\,dt,$$ 
where the supremum is taken over all intervals $I$ in $\R$ that contain $x$.\\
For $\delta >0,$ the maximal operator $M_\delta$ is defined by
$$M_\delta f(x):= M(\abs{f}^\delta)^{1/\delta}(x) = \left(\sup_{I\ni x}  \frac{1}{\abs{I}} \int_I \abs{f(t)}^\delta\,dt\right)^{1/\delta}.$$
The sharp maximal function $M^\#$ is given by
$$M^\#f(x):= \sup_{I\ni x} \inf_c \frac{1}{\abs{I}} \int_I \abs{f(t) - c}\,dt.$$
In fact, $$M^\#f(x) \approx \sup_{I\ni x} \frac{1}{\abs{I}} \int_I \abs{f(t) - \La f \Ra_I}\,dt,$$
where $\displaystyle \La f \Ra_I := \frac{1}{\abs{I}}\int_I f(t) dt$ is the average of $f$ over $I.$\\

\noindent
Given $\vec{f} = (f_1,\ldots,f_m),$ the maximal operators $\mathcal{M}$ and $\mathcal{M}_r$ with $r>0$ are defined by
$$ \mathcal{M}(\vec{f})(x) = \sup_{I \ni x} \prod_{i=1}^m \frac{1}{\abs{I}}\int_I \abs{f_i(y_i)} dy_i $$
and
$$\mathcal{M}_r(\vec{f})(x) = \sup_{I \ni x} \prod_{i=1}^m \left(\frac{1}{\abs{I}}\int_I \abs{f_i(y_i)}^r dy_i \right)^{1/r}.$$
We will be using dyadic versions of the above maximal operators which are defined by taking supremum over all dyadic intervals $I \ni x,$ instead of all intervals $I\ni x$. For convenience, we will use the same notation to denote the dyadic counterparts.\\

\noindent
We will use the following results regarding maximal functions. The dyadic analogs of these statements are also true.\\

\begin{itemize}
\item For any locally integrable function $f$, $\abs{f(x)} \leq Mf(x)$ almost everywhere. This inequality is a consequence of Lebesgue differentiation theorem and can be found in any standard Fourier Analysis textbooks, see for example \cite{JD} or \cite{LGC}. In fact, for any $\delta > 0,$ if $f \in L_{loc}^\delta(\R),$ then $\abs{f(x)} \leq M_\delta f(x)$ almost everywhere.\\

	\item For $ 0 <  \delta_1 < \delta_2 < \infty,$ $M_{\delta_1}f (x) \leq M_{\delta_2}f (x).$ This simple inequality can be verified just by using H$\ddot{\text{o}}$lder's inequality.\\ 
	
	\item For $w \in A_p$ with $1 < p < \infty$ there exists a constant $C$ such that  $$\norm{Mf}_{L^p(w)} \leq C \norm{f}_{L^p(w)}.\quad \quad 	\text{(See \cite{Per}, \cite{JD})}$$\\
	
			\item Fefferman-Stein's inequalities (see \cite{FS}): Let $w \in A_\infty$ and $0 < \delta, p <\infty.$ Then there exists a constant $C_1$ such that
	\begin{equation} \norm{M_\delta f}_{L^p(w)} \leq C_1 \norm{M_\delta^\# f}_{L^p(w)} \label{FSS}\end{equation}
	for all functions $f$ for which the left-hand side is finite.\\
	Similarly, there exists a constant $C_2$ such that \begin{equation} \norm{M_\delta f}_{L^{p,\infty}(w)} \leq C_2 \norm{M_\delta^\# f}_{L^{p,\infty}(w)} \label{FSW}\end{equation}
	for all functions $f$ for which the left-hand side is finite.\\
	
	\item Let $ \vec{P} = (p_1, \ldots, p_m)$ and $\vec{w} = (w_1,\ldots,w_m)$, where $1 < p_1,\ldots,p_m <\infty$ with $\frac{1}{p_1} + \dots + \frac{1}{p_m} = \frac{1}{p},$ and $w_1, \ldots, w_m$ are weights. Then the inequality
	\begin{equation} \norm{\mathcal{M}(\vec{f})}_{L^p(\nu_{\vec{w}})} \leq C \prod_{j=1}^m \norm{f_j}_{L^{p_j}(w_j)}\end{equation}
	holds for every $\vec{f} =(f_1, \ldots, f_m)$ if and only if $ \vec{w} \in A_{\vec{P}}.$ For $1 \leq p_1,\ldots,p_m <\infty$, the same statement is true with the inequality 
	\begin{equation} \norm{\mathcal{M}(\vec{f})}_{L^{p,\infty}(\nu_{\vec{w}})} \leq C \prod_{j=1}^m \norm{f_j}_{L^{p_j}(w_j)}.\end{equation}\\
	\noindent	
	These estimates and the one below have been obtained in \cite{LOPTT}.
	
	\item If $\vec{w} = (w_1,\ldots,w_m) \in A_{\vec{P}},$ for $ \vec{P} = (p_1, \ldots, p_m)$ with $1 < p_1,\ldots,p_m <\infty$ and $\frac{1}{p_1} + \dots + \frac{1}{p_m} = \frac{1}{p},$ then there exists an $r > 1$ such that $\vec{w}  \in A_{\vec{P}/r},$ and that \begin{equation} \label{Mr} \norm{\mathcal{M}_r(\vec{f})}_{L^p(\nu_{\vec{w}})} \leq C \prod_{j=1}^m \norm{f_j}_{L^{p_j}(w_j)}.\end{equation}\\

\end{itemize}
\subsection{BMO Space} A locally integrable function $b$ is said to be of bounded mean oscillation if
$$\norm{b}_{BMO}:=\sup_{I}\frac{1}{\abs{I}}\int_I \abs{b(x) - \La b \Ra_I} \,dx < \infty, $$
where the supremum is taken over all intervals in $\mathbb{R}.$ The space of all functions of bounded mean oscillation is denoted by $BMO.$\\

\noindent
If we take the supremum over all dyadic intervals in $\mathbb{R},$ we get a larger space of dyadic BMO functions which we denote by $BMO^d.$\\

\noindent
For $0<r<\infty,$ define
$$ BMO_r = \left\{b \in L_{loc}^r(\mathbb{R}): \norm{b}_{BMO_r} < \infty \right\},$$
where, $\displaystyle \norm{b}_{BMO_r} := \left(\sup_{I}\frac{1}{\abs{I}}\int_I \abs{b(x) - \La b \Ra_I}^r \,dx \right)^{1/r}.$\\

\noindent For any $0<r<\infty,$ the norms $\norm{b}_{BMO_r}$ and $\norm{b}_{BMO}$ are equivalent. The equivalence of norms for $r > 1$ is well-known and follows from John-Nirenberg's lemma (see \cite{JN}), while the equivalence for $0<r<1$ has been proved by Hanks in \cite{HR}. (See also \cite{SE}, page 179.)\\

\noindent
For $r=2$, it follows from the orthogonality of Haar system that 
$$ \norm{b}_{BMO_2^d} = \left(\sup_{I \in \D} \frac{1}{\abs{I}} \sum_{J \subseteq I} \abs{\widehat{b}(J)}^2\right)^{1/2}.$$

\section{Multilinear Dyadic Paraproducts and Haar Multipliers}

\noindent
We first recall the definitions of multilinear paraproduct operators and Haar multipliers introduced in \cite{IJK}.\\
For $m \geq 2$ and $\vec{\alpha} = (\alpha_1, \alpha_2, \ldots, \alpha_m) \in \{0,1\}^m$, the \textit{paraproduct operator} $P^{\vec{\alpha}}$ is defined by 
$$ P^{\vec{\alpha}}(f_1,f_2,\ldots,f_m) = \sum_{I\in\mathcal{D}} \left(\prod_{j=1}^m f_j(I,\alpha_j)\right) h_I^{\sigma(\vec{\alpha})} $$
where $f_i(I,0) = \langle f_i, h_I \rangle$, $f_i(I,1) = \langle f_i \rangle_I$ and $\sigma(\vec{\alpha}) = \#\{i: \alpha_i = 0\}.$

\noindent
Observe that if $\vec{\beta} = (\beta_1, \beta_2, \ldots,\beta_m)$ is some permutation of $\vec{\alpha} = (\alpha_1, \alpha_2, \ldots, \alpha_m)$ and $(g_1, g_2, \ldots, g_m)$ is the corresponding permutation of $(f_1, f_2, \ldots, f_m)$, then
$$P^{\vec{\alpha}} (f_1, f_2, \ldots, f_m) = P^{\vec{\beta}} (g_1, g_2, \ldots, g_m).$$

\noindent
For a given function $b,$ the paraproduct operator $\pi_b^{\vec{\alpha}}$ is defined by
$$\pi_b^{\vec{\alpha}}(f_1, f_2, \ldots, f_m) = P^{(0,\vec{\alpha})}(b,f_1, f_2, \ldots, f_m) = \sum_{I \in \mathcal{D}} \La b , h_I \Ra \prod_{j=1}^m f_j(I, \alpha_j) \; h_I^{1+\sigma(\vec{\alpha})}$$
where $(0,\vec{\alpha}) = (0,\alpha_1,\ldots, \alpha_m) \in \{0,1\}^{m+1}.$\\

\noindent Note that $$\pi_b^1(f) = P^{(0,1)}(b,f) = \sum_{I \in \mathcal{D}} b(I,0) f(I,1) h_I = \sum_{I \in \mathcal{D}} \La b, h_I \Ra \La f \Ra_I h_I = \pi_b(f).$$

\noindent
Given a symbol sequence $\epsilon = \{\epsilon_I\}_{I\in\D},$ the \textit{m-linear Haar multiplier} $ T_\epsilon^{\vec{\alpha}}$ is defined by
$$ T_\epsilon^{\vec{\alpha}} (f_1,f_2, \ldots,f_m) := \sum_{I\in \D} \epsilon_I \prod_{j=1}^m f_j(I,\alpha_j) h_I^{\sigma(\vec{\alpha})}.$$

\noindent
Let $1\leq i \leq m$. For a given function $g$, we define
$$ M_g^i(f_1, \ldots,f_m) := (f_1, \ldots,gf_i, \ldots, f_m).$$

\noindent
Note that if $T$ is multilinear, so is $T(M_g^i),$ and for $g=1,$ $ T(M_g^i) = T.$\\

\noindent
The following property of the multilinear dyadic operators will be very useful for our purpose.\\

\begin{lm} \label{DC} Let  $\vec{\alpha} = (\alpha_1,\alpha_2, \ldots,\alpha_m) \in \{0,1\}^m,$ and let $T$ be any of the $m-$linear operators $P^{\vec{\alpha}}, \pi_b^{\vec{\alpha}}$ or $T_\epsilon^{\vec{\alpha}}.$  Then for a given function $g$ and $ J \in \D,$ the function $$T\left(M_g^i(f_1,f_2, \ldots, f_m)\right) - T\left(M_g^i(f_1\mathsf{1}_J,f_2\mathsf{1}_J, \ldots, f_m \mathsf{1}_J)\right)$$   is  constant on $J.$ 
In particular, $$T(f_1,f_2, \ldots, f_m) - T(f_1\mathsf{1}_J,f_2\mathsf{1}_J, \ldots, f_m \mathsf{1}_J)$$   is  constant on $J.$\\
\end{lm}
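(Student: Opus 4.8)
The plan is to treat the three operators uniformly by writing each of them in the common form
$$ T(f_1,\ldots,f_m) = \sum_{I\in\D} c_I\Big(\prod_{j=1}^m f_j(I,\alpha_j)\Big) h_I^{\beta}, $$
where $c_I = 1$ for $P^{\vec\alpha}$, $c_I = \La b,h_I\Ra$ for $\pi_b^{\vec\alpha}$, and $c_I = \epsilon_I$ for $T_\epsilon^{\vec\alpha}$, and where the exponent is $\beta = \sigma(\vec\alpha)$ for $P^{\vec\alpha}$ and $T_\epsilon^{\vec\alpha}$ while $\beta = 1+\sigma(\vec\alpha)$ for $\pi_b^{\vec\alpha}$. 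In every admissible case $\beta\geq 1$ (recall $P^{\vec\alpha}$ and $T_\epsilon^{\vec\alpha}$ require $\vec\alpha\in U_m$, so $\sigma(\vec\alpha)\geq 1$). I would first dispose of the $M_g^i$ version by reducing it to the particular statement: setting $\tilde f_j=f_j$ for $j\neq i$ and $\tilde f_i=gf_i$, one has $M_g^i(f_1,\ldots,f_m)=(\tilde f_1,\ldots,\tilde f_m)$ and $M_g^i(f_1\mathsf{1}_J,\ldots,f_m\mathsf{1}_J)=(\tilde f_1\mathsf{1}_J,\ldots,\tilde f_m\mathsf{1}_J)$, since $(gf_i)\mathsf{1}_J=g(f_i\mathsf{1}_J)$. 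Hence it suffices to prove that $T(f_1,\ldots,f_m)-T(f_1\mathsf{1}_J,\ldots,f_m\mathsf{1}_J)$ is constant on $J$ for arbitrary inputs.

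The next step records the elementary geometry of $h_I^{\beta}$. Since $h_I$ is supported on $I$ and equals $\pm|I|^{-1/2}$ on the two halves $I_+,I_-$, its power $h_I^{\beta}$ (with $\beta\geq 1$) is again supported on $I$ and is constant on each of $I_+$ and $I_-$. In particular, if $J\subsetneq I$ then, by dyadic nesting, $J$ is contained in a single half of $I$, so $h_I^{\beta}$ is constant on $J$.

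With these in hand I would split the defining sum according to the position of $I$ relative to $J$, using that any two dyadic intervals are either disjoint or nested. If $I\cap J=\emptyset$, then $h_I^{\beta}$ vanishes on $J$, and also $(f_j\mathsf{1}_J)(I,\alpha_j)=0$, so such $I$ contribute nothing on $J$ to either expression. If $I\subseteq J$, then $h_I$ is supported in $I\subseteq J$, whence $\La f_j\mathsf{1}_J,h_I\Ra=\La f_j,h_I\Ra$ and $\La f_j\mathsf{1}_J\Ra_I=\La f_j\Ra_I$; thus $f_j(I,\alpha_j)=(f_j\mathsf{1}_J)(I,\alpha_j)$ for every $j$, and these terms cancel in the difference. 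Finally, if $J\subsetneq I$, then $h_I^{\beta}$ is constant on $J$ by the previous paragraph, so each corresponding term of $T(f_1,\ldots,f_m)$ and of $T(f_1\mathsf{1}_J,\ldots,f_m\mathsf{1}_J)$ is constant on $J$. Collecting the three families, on $J$ the difference reduces to a sum over $I\supsetneq J$ of terms that are individually constant on $J$, and is therefore constant on $J$.

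The one point requiring care is the convergence of the surviving series over $I\supsetneq J$, which I expect to be the main (though minor) obstacle. I would handle it by invoking that $T(f_1,\ldots,f_m)$ and $T(f_1\mathsf{1}_J,\ldots,f_m\mathsf{1}_J)$ are well-defined locally integrable functions under the standing integrability hypotheses; their difference is then a genuine function whose restriction to $J$ is the $L^1(J)$-limit of partial sums, each of which is constant on $J$ by the grouping above, forcing the limit to be constant on $J$ as well. Note that the cancellation of the $I\subseteq J$ terms is essential here: those terms involve $h_I^{\beta}$ supported strictly inside $J$ and are \emph{not} constant on $J$, so it is precisely the equality of Haar coefficients and averages for $I\subseteq J$ that removes the non-constant part.
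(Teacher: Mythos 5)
Your proof is correct, and while it rests on exactly the same two facts as the paper's proof, it organizes them differently. The paper proves the $M_g^i$ statement directly: it splits each input as $f_j = f_j^0 + f_j^\infty$ with $f_j^0 = f_j\mathsf{1}_J$, expands $T(M_g^i(f_1,\ldots,f_m))$ by multilinearity into $2^m$ terms, and observes that any term containing some $f_j^\infty$ has all Haar coefficients and averages vanishing for $I \subseteq J$ (since $f_j^\infty$ and $gf_j^\infty$ are identically zero on $J$), so that its expansion runs only over $I \not\subseteq J$ and is therefore constant on $J$; the particular case then follows by setting $g=1$. You invert the logic: the identity $(gf_i)\mathsf{1}_J = g(f_i\mathsf{1}_J)$ reduces the $M_g^i$ version to the particular one, and then, instead of the $2^m$-fold multilinear expansion, you compare the two defining Haar series term by term through the trichotomy $I\cap J=\emptyset$, $I\subseteq J$, $J\subsetneq I$. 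In substance the cancellation is identical (truncation to $J$ leaves the coefficients for $I\subseteq J$ unchanged, and $h_I^{\beta}$ with $I\not\subseteq J$ is constant on $J$), but your version buys some economy: no expansion into $2^m$ pieces, the general case obtained for free from the special one, and an explicit handling of the regrouping issue via $L^1(J)$-limits of partial sums, a point the paper leaves implicit when it sums its constants over the nonzero $\vec{\beta}$. Your remark that $\beta \geq 1$ in every admissible case (so $h_I^{\beta}$ is genuinely supported on $I$, which the disjoint case needs) is also correct and tacit in the paper, where the operators $P^{\vec{\alpha}}$ and $T_\epsilon^{\vec{\alpha}}$ are only defined for $\vec{\alpha}\in U_m$.
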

\begin{proof} Fix $J\in \D$. Let $f_i\mathsf{1}_J = f_i^0$ and $f_i - f_i\mathsf{1}_J = f_i^\infty.$\\
Since $T(M_g^i)$ is multilinear, 
\begin{eqnarray*}
T\left(M_g^i(f_1,f_2, \ldots, f_m)\right) &=& T\left(M_g^i(f_1^0 + f_1^\infty,f_2^0 +f_2^\infty, \ldots, f_m^0+f_m^\infty)\right)\\
&=& T\left(M_g^i(f_1^0,f_2^0, \ldots, f_m^0)\right) + \sum_{\substack{\vec{\beta} \in \{0,\infty\}^m \\ \vec{\beta} \neq \vec{0}}}{T\left(M_g^i(f_1^{\beta_1},f_2^{\beta_2}, \ldots, f_m^{\beta_m})\right)},
\end{eqnarray*}
where $\vec{\beta} = (\beta_1,\ldots,\beta_m).$\\
Observe that if $I \subseteq J,$ $\widehat{f_j^\infty}(I) = \widehat{gf_j^\infty}(I)= \La f_j^\infty \Ra_I = \La gf_j^\infty \Ra_I = 0,$ since each of the functions $f_j^\infty, gf_j^\infty$ is identically 0 on $J.$ So for $\vec{\beta} \neq \vec{0},$
$$T\left(M_g^i(f_1^{\beta_1},f_2^{\beta_2}, \ldots, f_m^{\beta_m})\right) = \sum_{I\in\D} \delta_J^T \prod_{j=1}^m F_j^{\beta_j}(I,\alpha_j) h_I^{\sigma(\vec{\alpha}, T)} = \sum_{I:I \not\subseteq J}\delta_J^T \prod_{j=1}^m F_j^{\beta_j}(I,\alpha_j) h_I^{\sigma(\vec{\alpha}, T)},$$
where
$$\delta_J^T = 
     \begin{cases}
       1, &\text{if }T = P^{\vec{\alpha}}\\
       \widehat{b}(J), \;\;\; & \text{if } T = \pi_b^{\vec{\alpha}}\\
			\epsilon_J & \text{if }T = T_\epsilon^{\vec{\alpha}}
            \end{cases},$$
$$F_j^{\beta_j} = 
     \begin{cases}
       f_j^{\beta_j}, &\text{if } j \neq i\\
       gf_j^{\beta_j}, & \text{if } j=i
            \end{cases},$$
						and
$$\sigma(\vec{\alpha}, T) = 
     \begin{cases}
       \sigma(\vec{\alpha}), &\text{if }T = P^{\vec{\alpha}} \text{ or } T_\epsilon^{\vec{\alpha}}\\
       \sigma(\vec{\alpha}) + 1, & \text{if }T = \pi_b^{\vec{\alpha}}
			            \end{cases}.$$
Since each $h_I$ with $I \not\subseteq J$ is constant on $J,$ so is $T\left(M_g^i(f_1^{\beta_1},f_2^{\beta_2}, \ldots, f_m^{\beta_m})\right)$ for $\vec{\beta} \neq \vec{0}.$ Consequently, $\displaystyle\sum_{\substack{\vec{\beta} \in \{0,\infty\}^m \\ \vec{\beta} \neq \vec{0}}}{T\left(M_g^i(f_1^{\beta_1},f_2^{\beta_2}, \ldots, f_m^{\beta_m})\right)}$ is constant on $J,$ say $C_J.$  Then for every $x \in J,$ $$T\left(M_g^i(f_1,f_2, \ldots, f_m)\right)(x) - T\left(M_g^i(T)(f_1\mathsf{1}_J,f_2\mathsf{1}_J, \ldots, f_m \mathsf{1}_J)\right)(x) = c_J.$$

\noindent Taking $g =1,$ we see that $T(f_1,f_2, \ldots, f_m) - T(f_1\mathsf{1}_J,f_2\mathsf{1}_J, \ldots, f_m \mathsf{1}_J)$  is  constant on $J.$
\end{proof}
\begin{lm} \label{MF}
Let $b\in BMO^d,$ and $\epsilon =(\epsilon_I)_{I\in \D}$ be bounded. Let $T \in \left\{P^{\vec{\alpha}}, T_\epsilon^{\vec{\alpha}}\right\}$ with $\vec{\alpha} \in U_m,$ or $T = \pi_b^{\vec{\alpha}}$ with $\vec{\alpha} \in \{0,1\}^m.$  Then for $0 < \delta < \frac{1}{m},$ and $\vec{f} = (f_1,f_2,\ldots,f_m) \in L^{p_1} \times L^{p_2} \times \cdots \times L^{p_m}$ with $1\leq p_i < \infty,$ we have $$M_\delta^\#\left(T(\vec{f\,})\right)(x) \lesssim \mathcal{M}(\vec{f\,})(x).$$

\end{lm}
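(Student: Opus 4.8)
The plan is to prove this pointwise bound in the standard Calder\'on--Zygmund fashion: for each dyadic interval $Q \ni x$ I will control the local oscillation of $T(\vec f\,)$ on $Q$ by a single product of averages over $Q$, using the localization supplied by Lemma \ref{DC} together with the endpoint weak-type bound for $T$ and Kolmogorov's inequality. Throughout, the maximal operators are the dyadic ones, and the sharp maximal function is the dyadic $M_\delta^\#$.

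First I would reduce to a workable form of $M_\delta^\#$. Since $M_\delta^\# g = \bigl(M^\#(\abs{g}^\delta)\bigr)^{1/\delta}$ and $0<\delta<1$, the elementary subadditivity estimate $\bigl|\,\abs{g}^\delta-\abs{c}^\delta\bigr|\le \abs{g-c}^\delta$ gives
$$M_\delta^\#\bigl(T(\vec f\,)\bigr)(x) \le \sup_{Q\ni x}\left(\inf_{c}\frac{1}{\abs{Q}}\int_Q \bigl|T(\vec f\,)(y)-c\bigr|^\delta\,dy\right)^{1/\delta},$$
the supremum taken over dyadic $Q$. Thus it suffices, for each such $Q$, to exhibit one constant $c=c_Q$ making the inner average small. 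Here Lemma \ref{DC} (with $g=1$) does the essential work: the function $T(\vec f\,)-T(f_1\mathsf{1}_Q,\ldots,f_m\mathsf{1}_Q)$ is constant on $Q$, so calling this constant $c_Q$ we have, on $Q$,
$$\frac{1}{\abs{Q}}\int_Q \bigl|T(\vec f\,)(y)-c_Q\bigr|^\delta\,dy = \frac{1}{\abs{Q}}\int_Q \bigl|T(f_1\mathsf{1}_Q,\ldots,f_m\mathsf{1}_Q)(y)\bigr|^\delta\,dy,$$
where each $f_j\mathsf{1}_Q\in L^1$ because $\abs{Q}<\infty$ and $f_j\in L^{p_j}$.

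Next I would pass from the truncated operator to the product of averages. Taking all exponents equal to $1$ in the weak-type statements recorded in the introduction, each of $P^{\vec\alpha}$, $T_\epsilon^{\vec\alpha}$, and $\pi_b^{\vec\alpha}$ maps $L^1\times\cdots\times L^1\to L^{1/m,\infty}$; for $\pi_b^{\vec\alpha}$ the implied constant depends on $\norm{b}_{BMO^d}$, and one uses that $b\in BMO^d$ already forces $\sup_{I}\abs{\widehat b(I)}/\abs{I}^{1/2}<\infty$, so the endpoint bound holds for every $\vec\alpha\in\{0,1\}^m$. Because $0<\delta<\tfrac1m$, Kolmogorov's inequality applies to $g=T(f_1\mathsf{1}_Q,\ldots,f_m\mathsf{1}_Q)$ on $Q$ with the exponents $\delta<\tfrac1m$ and yields
$$\left(\frac{1}{\abs{Q}}\int_Q \abs{g}^\delta\right)^{1/\delta} \lesssim \abs{Q}^{-m}\,\norm{g}_{L^{1/m,\infty}} \lesssim \abs{Q}^{-m}\prod_{j=1}^m \int_Q \abs{f_j} = \prod_{j=1}^m \frac{1}{\abs{Q}}\int_Q \abs{f_j} \le \mathcal{M}(\vec f\,)(x),$$
the last inequality holding because $Q$ is a dyadic interval containing $x$. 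Taking the supremum over dyadic $Q\ni x$ completes the argument.

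I expect the only genuinely delicate point to be the invocation of Kolmogorov's inequality, which requires the integrability exponent $\delta$ to lie strictly below the weak-type exponent $1/m$; this is precisely what the hypothesis $0<\delta<\tfrac1m$ provides, and it is the reason the restriction appears in the statement. Everything else is a clean combination of the constancy-on-$J$ property of Lemma \ref{DC} and the endpoint mapping properties of the operators, so the main obstacle is bookkeeping (tracking the $\abs{Q}$ powers through the Kolmogorov scaling) rather than any substantial new estimate.
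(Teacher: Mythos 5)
Your proposal is correct and follows essentially the same route as the paper's own proof: the reduction via $\bigl|\,\abs{g}^\delta-\abs{c}^\delta\bigr|\le\abs{g-c}^\delta$, the constant $c_Q$ produced by Lemma \ref{DC} with $g=1$, Kolmogorov's inequality with exponents $\delta<\tfrac1m$, and the endpoint bound $T\colon L^1\times\cdots\times L^1\to L^{1/m,\infty}$ applied to the truncations $f_j\mathsf{1}_Q$, ending with the product of averages dominated by $\mathcal{M}(\vec f\,)(x)$. The only cosmetic difference is that you track the $\abs{Q}^{-m}$ factor outside the weak norm while the paper pulls $1/\abs{Q}$ inside $T$ by multilinearity, and you additionally spell out why the endpoint bound holds for $\pi_b^{\vec\alpha}$ (via $\sup_I\abs{\widehat b(I)}/\abs{I}^{1/2}\le\norm{b}_{BMO^d}$), which the paper leaves as a citation.
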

\begin{proof}
Fix a point $x.$ We will show that for every dyadic interval $I$ containing $x$, there exists a constant $c_I$ such that
$$ \left(\frac{1}{\abs{I}} \int_I\left| \left|T(\vec{f})(y)\right|^\delta -\left|c_I\right|^\delta \right|dy \right)^{1/\delta} \lesssim \mathcal{M}(\vec{f\,})(x),$$ 
from which the assertion follows. In fact, since $\left| \left|T(\vec{f})(y)\right|^\delta -\left|c_I\right|^\delta \right| \leq \left| T(\vec{f})(y) -c_I\right|^\delta$ for $0<\delta<1$, it suffices to show that 
$$ \left(\frac{1}{\abs{I}} \int_I\left| T(\vec{f})(y) -c_I\right|^\delta \right)^{1/\delta} \lesssim \mathcal{M}(\vec{f\,})(x).$$
Fix a dyadic interval $I$ that contains $x$, and let $f_i^0 = f\mathsf{1}_I, f_i^\infty = f_i - f_i^0.$\\

\noindent
Writing $\vec{f^0} = (f_i^0, \ldots, f_m^0),$  Lemma \ref{DC} says that  $T(\vec{f})(y) - T(\vec{f^0})(y)$ is constant for all $y$ in $I$, say $c_I.$ We then have  $T(\vec{f})(y) -c_I = T(\vec{f^0})(y)$ for all $y\in I.$ So,
$$ \left(\frac{1}{\abs{I}} \int_I\left| T(\vec{f})(y) -c_I\right|^\delta \right)^{1/\delta} =\left(\frac{1}{\abs{I}} \int_I\left| T(\vec{f^0})(y)\right|^\delta \right)^{1/\delta}.$$
We can estimate this using the following form of Kolmogorov inequality:\\
If $0<p<q<\infty,$ then for any measurable function $f,$ there exists a constant $C = C(p,q)$ such that
\begin{equation} \left \Vert f \right \Vert_{L^p\left(I, \frac{dy}{\abs{I}}\right)} \leq C \left \Vert f \right \Vert_{L^{q,\infty}\left(I, \frac{dy}{\abs{I}}\right)}.
\label{eq:Kolmogorov}
\end{equation}
For $p = \delta, q = 1/m$ and $f = T(\vec{f^0}),$ \eqref{eq:Kolmogorov} becomes
$$\left(\frac{1}{\abs{I}} \int_I\left| T(\vec{f^0})(y)\right|^\delta dy\right)^{1/\delta} \leq C \left \Vert T(\vec{f^0})(y) \right \Vert_{L^{1/m,\infty}\left(I, \frac{dy}{\abs{I}}\right)}.$$
Now,\begin{eqnarray*} \left \Vert T(\vec{f^0})(y) \right \Vert_{L^{1/m,\infty}\left(I, \frac{dy}{\abs{I}}\right)}
 &=& \sup_{t>0}  t\left( \frac{1}{\abs{I}}\left\vert \left\{y \in I: \left\vert T(\vec{f^0})(y) \right\vert > t \right\}\right\vert \right)^m\\
&\leq& \sup_{t>0}  \frac{t}{\abs{I}^m} \left\vert \left\{y: \frac{1}{\abs{I}^m}\left\vert T(\vec{f^0})(y) \right\vert > \frac{t}{\abs{I}^m} \right\}\right\vert^m\\
&=& \sup_{t>0}  \frac{t}{\abs{I}^m} \left\vert \left\{y: \left\vert T\left(\frac{f_1^0}{\abs{I}}, \ldots, \frac{f_m^0}{\abs{I}} \right)(y)\right\vert > \frac{t}{\abs{I}^m} \right\}\right\vert^m\\
&=& \left \Vert T\left(\frac{f_1^0}{\abs{I}}, \ldots, \frac{f_m^0}{\abs{I}} \right)(y) \right \Vert_{L^{1/m,\infty}}.
\end{eqnarray*}
\noindent
Since  $\displaystyle \frac{f_i^0}{\abs{I}} \in L^1$ for all $1 \leq i \leq m$, it follows from the boundedness of $T: L^1 \times\cdots \times L^1 \rightarrow L^{1/m,\infty}$ that
\begin{eqnarray*}
\left \Vert T\left(\frac{f_1^0}{\abs{I}}, \ldots, \frac{f_m^0}{\abs{I}} \right)(y) \right \Vert_{L^{1/m,\infty}}
 &\lesssim & \prod_{i=1}^m \left\Vert \frac{f_i^0}{\abs{I}} \right\Vert_{L^1}\\
&=& \prod_{i=1}^m \int \frac{\abs{f_i^0}}{\abs{I}}\\
&=& \prod_{i=1}^m \frac{1}{\abs{I}}\int_I {\abs{f_i}}\\
&\leq & \mathcal{M}(\vec{f\,})(x).
\end{eqnarray*}
This completes the proof.
\end{proof}

\noindent
The following lemma gives us the finiteness condition needed to apply Fefferman-Stein inequalities \ref{FSS} and \ref{FSW} for the multilinear dyadic operators.\\

\begin{lm}\label{MFD}
Let $w \in A_\infty^d$ and $\vec{f} = (f_1, \ldots,f_m)$ where each $f_i$ is bounded and has compact support. If $\left\Vert \mathcal{M}(\vec{f\,})\right\Vert_{L^p(w)} < \infty$ for some $p > 0,$ then there exists a $\delta \in (0, 1/m)$ such that $\left\Vert {M}_\delta\left(T(\vec{f\,})\right)\right\Vert_{L^p(w)} < \infty.$ Similarly, if $\left\Vert \mathcal{M}(\vec{f\,})\right\Vert_{L^{p,\infty}(w)} < \infty$ for some $p > 0,$ then there exists a $\delta \in (0, 1/m)$ such that $\left\Vert {M}_\delta\left(T(\vec{f\,})\right)\right\Vert_{L^{p,\infty}(w)} < \infty.$
\end{lm}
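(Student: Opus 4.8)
The plan is to fix a bounded dyadic interval $Q_0$ containing the supports of all the $f_i$, write $g := T(\vec{f\,})$, choose any $\delta \in (0,1/m)$, and split the weighted integral $\int_\R (M_\delta g)^p w$ into a piece over a fixed neighbourhood $3Q_0$ of the supports and a piece over its complement. The only facts about $g$ I will need are the unweighted bounds from \cite{IJK}: since each $f_i$ is bounded with compact support it lies in every $L^{p_i}$, so for any $q \in (1/m,\infty)$ I may choose $1 < p_1,\dots,p_m < \infty$ with $\sum_j 1/p_j = 1/q$ and conclude $g \in L^q(\R)$ (for $T=\pi_b^{\vec{\alpha}}$ I use $\abs{\La b,h_I\Ra} \le \norm{b}_{BMO^d}\abs{I}^{1/2}$, which gives the required boundedness for every $\vec{\alpha}$). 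Since $\delta < 1/m < q$, the function $\abs{g}^\delta$ lies in $L^{q/\delta}$ with $q/\delta > 1$, so boundedness of $M$ on $L^{q/\delta}$ yields $M_\delta g = (M\abs{g}^\delta)^{1/\delta} \in L^q(\R)$ for every $q \in (1/m,\infty)$.

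For the far piece I would first localize: if a dyadic $I$ does not meet $Q_0$, then $f_j(I,\alpha_j) = 0$ for each $j$, so the $I$-term of $g$ vanishes; hence $g(x) \ne 0$ only for $x$ lying in a dyadic interval meeting $Q_0$. For such $x$ with $d := \operatorname{dist}(x,Q_0)$ large, the dyadic $I \ni x$ meeting $Q_0$ are nested with $\abs{I} \gtrsim d$, and estimating each coefficient by $\abs{\La f_j,h_I\Ra} \le \abs{I}^{-1/2}\norm{f_j}_1$ or $\abs{\La f_j\Ra_I} \le \abs{I}^{-1}\norm{f_j}_1$ according to $\alpha_j$, together with $\abs{h_I^{\sigma(\vec{\alpha})}} = \abs{I}^{-\sigma(\vec{\alpha})/2}$ (and the extra factors $\abs{\La b,h_I\Ra}\lesssim \abs{I}^{1/2}\norm{b}_{BMO^d}$ and $\abs{I}^{-1/2}$ in the case $\pi_b^{\vec{\alpha}}$), a direct count of powers shows every term is $\lesssim \abs{I}^{-m}\prod_j\norm{f_j}_1$; summing the geometric series gives the decay $\abs{g(x)} \lesssim d^{-m}\prod_j\norm{f_j}_1$. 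Feeding this bound, the finiteness of $\int_{3Q_0}\abs{g}^\delta$, and $m\delta < 1$ into the definition of $M_\delta$ shows $M_\delta g(x) \lesssim d^{-m}$ as well, where $1/\delta > m$ is used to absorb the large-interval contribution. On the other hand the same geometry produces, for $x$ in the far field, a dyadic $I \ni x$ with $\abs{I} \sim d$ and $I \supseteq Q_0$, whence $\mathcal{M}(\vec{f\,})(x) \gtrsim d^{-m}\prod_j\norm{f_j}_1$. Thus $M_\delta g(x) \lesssim \mathcal{M}(\vec{f\,})(x)$ off $3Q_0$, and the hypothesis gives $\int_{\R\setminus 3Q_0}(M_\delta g)^p w \lesssim \|\mathcal{M}(\vec{f\,})\|_{L^p(w)}^p < \infty$.

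For the near piece I invoke $w \in A_\infty^d$: it satisfies a reverse Hölder inequality $RH_{r_0}$ for some $r_0 > 1$, hence (by Jensen) $RH_r$ for every $1 < r \le r_0$, and I may take $r$ close enough to $1$ that $pr' > 1/m$. Then $w \in L^r(3Q_0)$ and, by the first paragraph, $M_\delta g \in L^{pr'}(3Q_0)$, so Hölder gives $\int_{3Q_0}(M_\delta g)^p w \le \big(\int_{3Q_0}(M_\delta g)^{pr'}\big)^{1/r'}\big(\int_{3Q_0}w^r\big)^{1/r} < \infty$. Adding the two pieces proves $\|M_\delta g\|_{L^p(w)} < \infty$. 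The weak-type statement follows by the same split: on the far field the pointwise bound $M_\delta g \lesssim \mathcal{M}(\vec{f\,})$ yields $\|M_\delta g\|_{L^{p,\infty}(w,\,\R\setminus 3Q_0)} \le \|\mathcal{M}(\vec{f\,})\|_{L^{p,\infty}(w)} < \infty$, while on $3Q_0$ the weak norm is dominated by the strong norm already shown finite, the two pieces combining by quasi-subadditivity of the weak quasinorm. The main obstacle is the far-field comparison $M_\delta g \lesssim \mathcal{M}(\vec{f\,})$: both the pointwise decay of $g$ and the matching lower bound for $\mathcal{M}(\vec{f\,})$ rest on careful bookkeeping of which dyadic intervals meet $Q_0$, and one must check that the lack of translation invariance of the dyadic grid does not spoil the lower bound — it does not, precisely because $g$ is itself supported in the dyadic cone over $Q_0$, where suitable comparison intervals are always available.
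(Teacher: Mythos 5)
Your setup has one concrete flaw: you begin by fixing ``a bounded \emph{dyadic} interval $Q_0$ containing the supports of all the $f_i$,'' but no such interval exists in general, since every $I\in\D$ lies entirely in $[0,\infty)$ or entirely in $(-\infty,0)$, while compact supports may meet both half-lines. This is not cosmetic, because your far-field lower bound $\mathcal{M}(\vec{f}\,)(x)\gtrsim d^{-m}\prod_j\norm{f_j}_1$ is false for the dyadic $\mathcal{M}$ (which is what the paper uses throughout): for $x>0$, no dyadic interval containing $x$ ever sees the mass of an $f_j$ on $(-\infty,0)$, so the right-hand side must carry only the one-sided masses. The paper's device repairs this: put $S=S'\cup S''$ with $S'=[0,2^{-k})$, $S''=[-2^{-k},0)$ dyadic, and argue on each half-line separately. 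For $x>0$ the only intervals contributing to $g=T(\vec{f}\,)$ at $x$ are the ancestors $[0,2^j)\supseteq S'$ of $S'$ containing $x$ (this is the paper's $I_x$), every coefficient $f_j(I,\alpha_j)$ then involves only $f_j\mathsf{1}_{S'}$, so your decay estimate becomes $\abs{g(x)}\lesssim d^{-m}\prod_j\norm{f_j\mathsf{1}_{S'}}_1$, and the matching lower bound $\mathcal{M}(\vec{f}\,)(x)\geq\prod_j\La\abs{f_j}\Ra_{I_x}\gtrsim d^{-m}\prod_j\norm{f_j\mathsf{1}_{S'}}_1$ uses the same one-sided masses, so the comparison closes; if some $f_j\mathsf{1}_{S'}=0$, then $g$ and the dyadic $M_\delta g$ vanish identically on $(0,\infty)$ and there is nothing to prove. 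Your closing remark about the ``dyadic cone'' gestures at exactly this, but as written the single-$Q_0$ assumption and the two-sided $\norm{f_j}_1$ in the lower bound are wrong; with this per-half-line bookkeeping (and since only finiteness is claimed, constants depending on $\vec{f}$ are harmless) your argument is complete.

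Beyond that slip, your route genuinely differs from the paper's in how $M_\delta$ is handled, and both halves of your split otherwise mirror the paper (near piece: reverse H\"older for $w\in A_\infty^d$ plus unweighted boundedness of $T$ from the earlier work; far piece: geometric-series decay compared against $\mathcal{M}(\vec{f}\,)$ so the hypothesis can be invoked). The paper never estimates $M_\delta(T(\vec{f}\,))$ pointwise: it picks $p_0>\max(1,pm)$ with $w\in A_{p_0}^d$ and $\delta<p/p_0$, and uses the \emph{weighted} bound $M\colon L^{p_0}(w)\to L^{p_0}(w)$ to reduce $\norm{M_\delta(T(\vec{f}\,))}_{L^p(w)}\lesssim\norm{T(\vec{f}\,)}_{L^p(w)}$, after which all pointwise work concerns $T(\vec{f}\,)$ alone; this reduction is precisely where the ``there exists a $\delta$'' (depending on $w$ through $p_0$) comes from. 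You instead fix an arbitrary $\delta\in(0,1/m)$ and prove far-field decay of $M_\delta g$ directly, absorbing long intervals via $m\delta<1$ and $1/\delta>m$ together with the local finiteness of $\int\abs{g}^\delta$; this costs extra bookkeeping but avoids the weighted maximal theorem entirely and yields the slightly stronger conclusion that \emph{every} $\delta\in(0,1/m)$ works. Your use of $\abs{\La b,h_I\Ra}\leq\norm{b}_{BMO^d}\abs{I}^{1/2}$ to cover $\pi_b^{\vec{\alpha}}$ for all $\vec{\alpha}$, and your observation that the near piece is finite unconditionally (so the weak-type case needs the hypothesis only in the far field), are both consistent with what the paper does.
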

\begin{proof} We prove the first assertion, the second one follows from similar arguments.\\

\noindent
Since $w \in A_\infty^d,$ it is in $A_{p_0}^d$ for some $p_0 > \text{max} (1, pm).$ Then for any $\delta$ with $0<\delta<p/p_0 < 1/m,$ we have\\
\begin{eqnarray*}
\left\Vert M_\delta \left(T(\vec{f}\,)\right) \right\Vert_{L^p(w)} 
&\leq & \left\Vert M_{p/p_0} \left(T(\vec{f}\,)\right) \right\Vert_{L^p(w)}\\
&=& \left[\int_{\mathbb{R}} \left \{\left(\sup_{I\ni x} \frac{1}{\abs{I}} \int_I \abs{T(\vec{f}\,)}^{p/p_0}\,dt \right)^{p_0/p} \right \}^p dw(x)\right]^{1/p}\\
&=& \left[\int_{\mathbb{R}} M\left(T(\vec{f}\,)^{p/p_0}\right)^{p_0}  dw\right]^{\frac{1}{p_0}\times \frac{p_0}{p}}\\
&=& \left\Vert M \left(T(\vec{f}\,)^{p/p_0}\right) \right\Vert_{L^{p_0}(w)} ^ {p_0/p},
\end{eqnarray*}
The boundedness of $M: L^{p_0}(w) \rightarrow L^{p_0}(w)$ for $w \in A_{p_0}^d$ gives 
$$\left\Vert M \left(T(\vec{f}\,)^{p/p_0}\right) \right\Vert_{L^{p_0}(w)}  \lesssim \left\Vert T(\vec{f}\,)^{p/p_0} \right\Vert_{L^{p_0}(w)}.$$
Consequently,
\begin{eqnarray*}
\left\Vert M_\delta \left(T(\vec{f}\,)\right) \right\Vert_{L^p(w)} 
&\lesssim & \left\Vert T(\vec{f}\,)^{p/p_0} \right\Vert_{L^{p_0}(w)} ^ {p_0/p}\\
&=& \left(\int_{\mathbb{R}} \left \vert T(\vec{f}\,)^{p/p_0} \right \vert ^{p_0} dw\right)^{\frac{1}{p_0}\times \frac{p_0}{p}}\\
&=& \left(\int_{\mathbb{R}} \left\vert T(\vec{f}\,)\right\vert^{p}  dw\right)^{1/p}\\
&=& \left\Vert T(\vec{f}\,) \right\Vert_{L^p(w)},
\end{eqnarray*}
So, it suffices to prove that $\left\Vert T(\vec{f\,})\right\Vert_{L^p(w)} < \infty.$\\

\noindent
Since each $f_i$ has compact support, there exist dyadic intervals $S' = [0, 2^{-k})$ and $S'' = [-2^{-k}, 0)$ such that the support of every $f_i$ is contained in $S=S'\cup S''.$\\
To prove the assertion, it suffices to show that $$\left\Vert T(\vec{f\,})\right\Vert_{L^p(S,w)} < \infty \;\text{ and }\; \left\Vert T(\vec{f\,})\right\Vert_{L^p(\mathbb{R}\backslash S, w)} < \infty.$$

\noindent
Since $w \in A_\infty^d,$ $w^{1+\gamma} \in L^1_{loc}$ for sufficiently small $\gamma$, (see \cite{Per}  or \cite{LGM}). In particular,  $w \in L^q(S)$ for $q:= 1+\gamma.$ We can choose $\gamma$ small enough so that $w \in L^q(S)$ and $q'p > \frac{1}{m}.$ Then by H$\ddot{\text{o}}$lder's inequality, we have
\begin{eqnarray*}
\left\Vert T(\vec{f\,})\right\Vert_{L^p(S,w)} &=& \left(\int_S \left\vert T(\vec{f\,})\right\vert^p  w dx\right)^{1/p}\\
&\leq& \left(\left(\int_S \left\vert T(\vec{f\,})\right\vert^{pq'}  dx\right)^{1/q'} \left(\int_S w^q dx\right)^{1/q}\right)^{1/p}\\
&<& \infty.
\end{eqnarray*}
Here, the finiteness of $\displaystyle\int_S \left\vert T(\vec{f\,})\right\vert^{pq'}  dx$ follows from the boundedness of $T: L^{mpq'}\times \cdots \times  L^{mpq'} \rightarrow  L^{pq'},$ and the fact that each $f_i$ (being bounded with compact support) is in $ L^{mpq'}.$ We refer to \cite{IJK} for the unweighted theory of multilinear dyadic operators.\\

\noindent
To prove $\left\Vert T(\vec{f\,})\right\Vert_{L^p(\mathbb{R}\backslash S, w)} < \infty,$ it suffices to show that
$$ \left\vert T(\vec{f\,})(x) \right\vert \leq C \mathcal{M}(\vec{f\,})(x)\quad \text{ for every } x\in \mathbb{R}\backslash S.$$
\noindent
We prove this for $T = \pi_b^{\vec{\alpha}}$. Proofs for $P^{\vec{\alpha}}$ and $T_\epsilon^{\vec{\alpha}}$ follow similarly.\\
\noindent
Fix $x \in \mathbb{R}\backslash S.$ Let $I_x$ be the smallest dyadic interval that contains $x$ and one of the intervals $S'$ and $S''.$

\noindent
For definiteness, assume $x >0.$ In this case $I_x$ is the smallest dyadic interval containing $x$ and $S'.$ Note that if $x \notin I, h_I(x) = 0$ and, if $x \in I$ with $I \cap S' = \emptyset, f_j(I,\alpha_j) = 0$ for each $j.$ So,
\begin{eqnarray*}
\left\vert \pi_b^{\vec{\alpha}}(\vec{f\,})(x) \right\vert 
&=& \left\vert \sum_{I \in \mathcal{D}} \widehat{b}(I) \prod_{j=1}^m f_j(I, \alpha_j) \; h_I^{1+\sigma(\vec{\alpha})}(x) \right\vert\\
&=& \left\vert \sum_{I \supseteq I_x} \widehat{b}(I) \prod_{j=1}^m f_j(I, \alpha_j) \; h_I^{1+\sigma(\vec{\alpha})}(x) \right\vert\\
&\leq& \sum_{I \supseteq I_x} \frac{\left\vert\widehat{b}(I)\right\vert}{\sqrt{\abs{I}}} \left(\prod_{j:\alpha_j=0} \frac{\left\vert\widehat{f_j}(I)\right\vert}{\sqrt{\abs{I}}}\right) \left(\prod_{j:\alpha_j=1} \left\vert\La f_j \Ra_I\right\vert\right) \; \mathsf{1}_I(x)\\
&\leq& \left\Vert b\right\Vert_{BMO^d}\sum_{I \supseteq I_x}  \left(\prod_{j:\alpha_j=0} \frac{\left\vert\widehat{f_j}(I)\right\vert}{\sqrt{\abs{I}}}\right) \left(\prod_{j:\alpha_j=1} \left\vert\La f_j \Ra_I\right\vert\right) ,\\
\end{eqnarray*}
where the last inequality follows from the fact that for $b \in BMO^d,$
$$ \frac{\left\vert\widehat{b}(I)\right\vert}{\sqrt{\abs{I}}} \leq \left(\frac{1}{\abs{I}}\sum_{J \subseteq I} \left\vert\widehat{b}(I)\right\vert^2 \right)^{1/2} \leq \left\Vert b\right\Vert_{BMO^d}.$$

\noindent Note that
$\displaystyle \frac{\left\vert\widehat{f_j}(I)\right\vert}{\sqrt{\abs{I}}} = \frac{1}{\sqrt{\abs{I}}} \left\vert \int f_jh_I \right\vert \leq \frac{1}{\sqrt{\abs{I}}}  \int \left\vert f_j\right\vert \frac{\mathsf{1}_I}{\sqrt{\abs{I}}} = \frac{1}{\abs{I}}\int_I \abs{f_j} = \La \abs{f_j} \Ra_I,$ and since $f_j$ is 0 on $\mathbb{R} \backslash S,$ we have
$\displaystyle \La \abs{f_j} \Ra_{I^1} = \frac{\La \abs{f_j} \Ra_I}{2}$ whenever $I^1$ is the parent of $I$ with $I_x \subseteq I.$ So, we have
\begin{eqnarray*}
\left\vert \pi_b^{\vec{\alpha}}(\vec{f\,})(x) \right\vert 
&\leq& \left\Vert b\right\Vert_{BMO^d}\sum_{I \supseteq I_x}   \prod_{j=1}^m \La \abs{f_j} \Ra_I \\
&=& \left\Vert b\right\Vert_{BMO^d} \left( \prod_{j=1}^m \La \abs{f_j} \Ra_{I_x} + \frac{1}{2^m}\prod_{j=1}^m \La \abs{f_j} \Ra_{I_x} + \frac{1}{2^{2m}}\prod_{j=1}^m \La \abs{f_j} \Ra_{I_x}+ \cdots \right)\\
&=& \frac{2^m}{(2^m -1)}\left\Vert b\right\Vert_{BMO^d} \prod_{j=1}^m \La \abs{f_j} \Ra_{I_x}\\
&\leq& \frac{2^m}{(2^m -1)}\left\Vert b\right\Vert_{BMO^d} \mathcal{M}(\vec{f\,})(x).
\end{eqnarray*}
The same proof works for $x <0$ too. This completes the proof.
\end{proof}
\begin{thm}\label{MT}
Let $b\in BMO^d,$ and $\epsilon =(\epsilon_I)_{I\in \D}$ be bounded.  Let $T \in \left\{P^{\vec{\alpha}}, T_\epsilon^{\vec{\alpha}}\right\}$ with $\vec{\alpha} \in U_m,$ or $T = \pi_b^{\vec{\alpha}}$ with $\vec{\alpha} \in \{0,1\}^m.$  Then for $w\in A_\infty^d$ and $p>0$,
$$\norm{T(\vec{f}\,)}_{L^p(w)} \lesssim \norm{\mathcal{M}(\vec{f}\,)}_{L^p(w)}$$
and $$\norm{T(\vec{f}\,)}_{L^{p,\infty}(w)} \lesssim \norm{\mathcal{M}(\vec{f}\,)}_{L^{p,\infty}(w)}$$
for all $m$-tuples $\vec{f} =(f_1,\ldots,f_m)$ of bounded functions with compact support. 
\end{thm}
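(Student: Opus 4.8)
The plan is to synthesize the three preceding lemmas with the Fefferman-Stein inequalities \eqref{FSS} and \eqref{FSW}. I will establish the strong-type estimate first; the weak-type estimate will follow from the identical chain of inequalities, using \eqref{FSW} in place of \eqref{FSS} and the weak-type half of Lemma \ref{MFD}.

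First I would dispose of the trivial case: if $\norm{\mathcal{M}(\vec{f}\,)}_{L^p(w)} = \infty$ there is nothing to prove, so I may assume it is finite. This reduction is precisely what makes the rest of the argument run, because Lemma \ref{MFD} then furnishes some $\delta \in (0, 1/m)$ for which $\norm{M_\delta(T(\vec{f}\,))}_{L^p(w)} < \infty$, and this finiteness is exactly the hypothesis under which the Fefferman-Stein inequality \eqref{FSS} is valid. For this same $\delta$, Lemma \ref{MF} supplies the pointwise bound $M_\delta^\#(T(\vec{f}\,))(x) \lesssim \mathcal{M}(\vec{f}\,)(x)$; note that a bounded, compactly supported $\vec{f}$ certainly lies in $L^{p_1}\times\cdots\times L^{p_m}$, so the hypotheses of that lemma are met.

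With $\delta$ fixed, I would chain the estimates as follows. Since each $f_i$ is bounded with compact support, the unweighted theory of \cite{IJK} places $T(\vec{f}\,)$ in an appropriate Lebesgue space, hence $T(\vec{f}\,) \in L^\delta_{loc}(\R)$; the pointwise domination $\abs{g(x)} \leq M_\delta g(x)$ a.e.\ applied to $g = T(\vec{f}\,)$ therefore gives $\norm{T(\vec{f}\,)}_{L^p(w)} \leq \norm{M_\delta(T(\vec{f}\,))}_{L^p(w)}$. Applying \eqref{FSS} (legitimate by the finiteness obtained from Lemma \ref{MFD}) and then Lemma \ref{MF} yields
$$\norm{M_\delta(T(\vec{f}\,))}_{L^p(w)} \leq C_1 \norm{M_\delta^\#(T(\vec{f}\,))}_{L^p(w)} \lesssim \norm{\mathcal{M}(\vec{f}\,)}_{L^p(w)}.$$
Combining the two displays proves the strong-type inequality, and repeating the argument verbatim with \eqref{FSW} and the weak-type conclusion of Lemma \ref{MFD} proves the weak-type inequality.

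I do not anticipate a genuine obstacle, since the substantive analysis has all been front-loaded into Lemmas \ref{DC}, \ref{MF}, and \ref{MFD}; the theorem is essentially their assembly. The single point that must not be glossed over is the logical ordering: the Fefferman-Stein inequalities are conditional on the finiteness of their left-hand sides, so the reduction to $\norm{\mathcal{M}(\vec{f}\,)}_{L^p(w)} < \infty$ followed by Lemma \ref{MFD} is not a formality but the exact mechanism that unlocks \eqref{FSS} and \eqref{FSW}. This is also why the hypotheses of boundedness and compact support on $\vec{f}$ are imposed — they feed directly into Lemma \ref{MFD} — and these restrictions will presumably be removed later by a density or approximation argument when the main weighted theorems are deduced.
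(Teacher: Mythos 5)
Your proposal is correct and takes essentially the same approach as the paper's own proof: reduce to the case $\norm{\mathcal{M}(\vec{f}\,)}_{L^p(w)} < \infty$, invoke Lemma \ref{MFD} to secure the finiteness that licenses the Fefferman--Stein inequalities \eqref{FSS} and \eqref{FSW}, and then chain the pointwise domination $\abs{T(\vec{f}\,)} \leq M_\delta\left(T(\vec{f}\,)\right)$ with the sharp-function bound of Lemma \ref{MF}. You have also correctly identified the one delicate point, namely that the finiteness hypothesis in Fefferman--Stein is exactly what Lemma \ref{MFD} supplies, which is the same emphasis the paper places.
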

\begin{proof}
To prove the first inequality, assume that $\norm{\mathcal{M}(\vec{f}\,)}_{L^p(w)} < \infty,$ otherwise there is nothing to prove. Then by Lemma \ref{MFD}, there exists a $\delta \in (0, 1/m)$ such that $\left\Vert {M}_\delta\left(T(\vec{f\,})\right)\right\Vert_{L^p(w)} < \infty.$ For such $\delta$, we have
$$ \left\Vert T(\vec{f\,})\right\Vert_{L^p(w)} \leq \left\Vert {M}_\delta\left(T(\vec{f\,})\right)\right\Vert_{L^p(w)} \leq C \left\Vert {M}_\delta^\#\left(T(\vec{f\,})\right)\right\Vert_{L^p(w)} \leq C\left\Vert {\mathcal{M}}(\vec{f\,})\right\Vert_{L^p(w)},$$ where the first and last inequalities follow from pointwise control and the second inequality is the Fefferman-Stein's inequality \eqref{FSS}.\\

\noindent
Proof of the second inequality follows similarly, by applying Lemma \ref{MFD} and using the Fefferman-Stein's inequality \eqref{FSW} for weak-type estimates.
\end{proof}
\begin{thm}
Let $b\in BMO^d,$ and $\epsilon =(\epsilon_I)_{I\in \D}$ be bounded.  Suppose $T \in \left\{P^{\vec{\alpha}}, T_\epsilon^{\vec{\alpha}}\right\}$ with $\vec{\alpha} \in U_m,$ or $T = \pi_b^{\vec{\alpha}}$ with $\vec{\alpha} \in \{0,1\}^m.$ Let $\vec{w} = (w_1,\ldots,w_m) \in A_{\vec{P}}^d$ for $\vec{P}=(p_1, \ldots,p_m)$ with $ \frac{1}{p_1}+\dots+ \frac{1}{p_m} = \frac{1}{p}.$
\begin{enumerate}[label = $(\alph*)$]
\item If $1 < p_1, \ldots,p_m < \infty,$ then 
\begin{equation} \norm{T(\vec{f})}_{L^{p}(\nu_{\vec{w}})} \leq C \prod_{j=1}^m \norm{f_j}_{L^{p_j}(w_j)}.\end{equation}
\item If $1 \leq p_1, \ldots,p_m < \infty,$ then 
\begin{equation} \norm{T(\vec{f})}_{L^{p,\infty}(\nu_{\vec{w}})} \leq C \prod_{j=1}^m \norm{f_j}_{L^{p_j}(w_j)}.\end{equation}
\end{enumerate}
\end{thm}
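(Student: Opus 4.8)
The plan is to combine the control of $T$ by the multi(sub)linear maximal function $\mathcal M$ furnished by Theorem~\ref{MT} with the weighted boundedness of $\mathcal M$ recorded in Section~2. The key preliminary observation is that, since $\vec w\in A_{\vec P}^d$, the weight $\nu_{\vec w}=\prod_{j=1}^m w_j^{p/p_j}$ lies in $A_{mp}^d\subseteq A_\infty^d$ (the dyadic form of the fact recalled in the Preliminaries; note $mp\ge 1$ because $1/p=\sum_j 1/p_j\le m$). Hence Theorem~\ref{MT} is applicable with $w=\nu_{\vec w}$.

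For an $m$-tuple $\vec f=(f_1,\dots,f_m)$ of bounded functions with compact support we then chain the two estimates:
\begin{equation*}
\norm{T(\vec f\,)}_{L^p(\nu_{\vec w})}\;\lesssim\;\norm{\mathcal M(\vec f\,)}_{L^p(\nu_{\vec w})}\;\leq\;C\prod_{j=1}^m\norm{f_j}_{L^{p_j}(w_j)},
\end{equation*}
the first inequality being Theorem~\ref{MT} and the second the weighted maximal estimate of Section~2, valid since $\vec w\in A_{\vec P}^d$ and $1<p_1,\dots,p_m<\infty$; this gives part~$(a)$ for such $\vec f$. Part~$(b)$ is obtained the same way, replacing these two ingredients by the weak-type inequality of Theorem~\ref{MT} and the weak-type maximal bound $\norm{\mathcal M(\vec f\,)}_{L^{p,\infty}(\nu_{\vec w})}\lesssim\prod_j\norm{f_j}_{L^{p_j}(w_j)}$, which holds for $1\le p_1,\dots,p_m<\infty$.

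It remains to remove the restriction to bounded compactly supported data, and this is where the only genuine work lies. Given $f_j\in L^{p_j}(w_j)$, put $f_j^{(n)}=f_j\,\mathsf 1_{\{|f_j|\le n\}}\mathsf 1_{[-n,n]}$; each $f_j^{(n)}$ is bounded with compact support and, by dominated convergence, $f_j^{(n)}\to f_j$ in $L^{p_j}(w_j)$. By multilinearity one has the telescoping identity
\begin{equation*}
T(\vec f^{(n)})-T(\vec f^{(k)})=\sum_{j=1}^m T\bigl(f_1^{(n)},\dots,f_{j-1}^{(n)},\,f_j^{(n)}-f_j^{(k)},\,f_{j+1}^{(k)},\dots,f_m^{(k)}\bigr),
\end{equation*}
in which every argument is again bounded with compact support. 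Applying the a~priori estimate of the previous step to each summand, and measuring in the quasi-norm $\norm{\cdot}_{L^p(\nu_{\vec w})}^{\min(p,1)}$ (so that subadditivity $\norm{F+G}^{\min(p,1)}\le\norm{F}^{\min(p,1)}+\norm{G}^{\min(p,1)}$ is available even when $p<1$), one finds that $\{T(\vec f^{(n)})\}_n$ is Cauchy in $L^p(\nu_{\vec w})$; its limit is $T(\vec f\,)$, and letting $n\to\infty$ in the bound yields part~$(a)$ in general.

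For the weak-type part~$(b)$, since $L^{p,\infty}(\nu_{\vec w})$ is only a quasi-normed space, I would instead extract an a.e.\ convergent subsequence and use the lower semicontinuity of the weak quasi-norm under a.e.\ convergence to conclude, bounding the resulting $\liminf$ by the a~priori weak estimate. The main obstacle throughout is precisely this density passage: one has to verify that the Haar series defining $T(\vec f\,)$ genuinely converges to the approximating limit, and to keep the quasi-norm bookkeeping correct in the range $p<1$, which arises whenever $\sum_j 1/p_j>1$. Once these points are settled, everything reduces to the two already-established ingredients, Theorem~\ref{MT} and the weighted maximal inequalities.
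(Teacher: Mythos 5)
Your proposal is correct and takes essentially the same route as the paper's own proof: observe that $\vec{w}\in A_{\vec{P}}^d$ forces $\nu_{\vec{w}}\in A_{mp}^d\subseteq A_\infty^d$, chain Theorem \ref{MT} with the weighted (strong and weak) bounds for $\mathcal{M}$ from Section 2, and then remove the restriction to nice data by density. The only difference is one of detail rather than substance: the paper disposes of the density step with a one-line appeal to the density of simple functions in $L^p(w)$, whereas you carry out the limiting argument explicitly (truncations, the telescoping identity, quasi-norm subadditivity for $p<1$, and a Fatou-type argument for the weak case), honestly flagging the identification of the limit with the Haar-series definition of $T(\vec{f}\,)$ --- a point the paper also leaves implicit.
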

\begin{proof} Since the simple functions in $L^p(w)$ are dense in $L^p(w)$ for any weight $w$ (see \cite{BS}), it suffices to prove the estimates for $\vec{f} = (f_1,f_2,\ldots,f_m)$ with $f_i \in L^{p_i}(w_i)$ simple.
Note that $\vec{w} = (w_1,\ldots,w_m) \in A_{\vec{P}}^d$ implies that $\nu_{\vec{w}} \in A_\infty^d.$ So, by Theorem \ref{MT} and the boundedness properties of the multilinear maximal function $\mathcal{M}$, we have\\
$$\norm{T(\vec{f}\,)}_{L^p(\nu_{\vec{w}})} \lesssim \norm{\mathcal{M}(\vec{f}\,)}_{L^p(\nu_{\vec{w}})} \lesssim \prod_{j=1}^m \norm{f_j}_{L^{p_j}(w_j)},$$
and $$\norm{T(\vec{f}\,)}_{L^{p,\infty}(\nu_{\vec{w}})} \lesssim \norm{\mathcal{M}(\vec{f}\,)}_{L^{p,\infty}(\nu_{\vec{w}})}\lesssim \prod_{j=1}^m \norm{f_j}_{L^{p_j}(w_j)}.$$

\end{proof}
\section{Commutators of Multilinear Haar Multipliers}

\begin{dfn}Let $\vec{\alpha} \in U_m $ and $\epsilon =(\epsilon_I)_{I\in \D}$ be bounded.  Given a locally integrable function $b$, we define the commutator $[b,T_\epsilon^{\vec{\alpha}}]_i, 1\leq i \leq m,$ by
$$[b,T_\epsilon^{\vec{\alpha}}]_i(f_1,f_2,\ldots,f_m)(x) := b(x)T_\epsilon^{\vec{\alpha}}(f_1,f_2,\ldots,f_m)(x) - T_\epsilon^{\vec{\alpha}}(f_1, \ldots, bf_i,\ldots,f_m)(x). $$
 $$ i.e. \quad [b,T_\epsilon^{\vec{\alpha}}]_i = M_b \circ T_\epsilon^{\vec{\alpha}} - T_\epsilon^{\vec{\alpha}} \circ M_b^i.$$
\end{dfn}

\begin{thm}\label{MFC}
Let $\vec{\alpha} \in U_m $ and $\epsilon =(\epsilon_I)_{I\in \D}$ be bounded. Let $\delta \in (0, 1/m)$ and $\gamma > \delta$. Then for any $r>1,$
\begin{equation}
M_\delta^\#\left( [b,T_\epsilon^{\vec{\alpha}}]_i(\vec{f})\right)(x) \lesssim \norm{b}_{BMO^d} \left(\mathcal{M}_r(\vec{f})(x) + M_\gamma \left(T_\epsilon^{\vec{\alpha}}(\vec{f}\,) \right)(x)\right)
\end{equation}
for all $m$-tuples $\vec{f} = (f_1,f_2,\ldots,f_m)$ of bounded measurable functions with compact support.
\end{thm}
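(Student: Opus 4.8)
The plan is to follow the template of Lemma \ref{MF}: fix $x$, fix a dyadic interval $I\ni x$, produce a well-chosen constant $c_I$, and bound
$$\left(\frac{1}{\abs{I}}\int_I \abs{[b,T_\epsilon^{\vec\alpha}]_i(\vec f)(y) - c_I}^\delta\,dy\right)^{1/\delta}$$
by the right-hand side. Since $0<\delta<1/m\le 1/2<1$, the pointwise inequality $\bigl|\abs{a}^\delta - \abs{c_I}^\delta\bigr| \le \abs{a-c_I}^\delta$ lets me pass from this quantity to $M_\delta^\#$ exactly as in Lemma \ref{MF}, so it suffices to control the displayed oscillation.

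The first key step is the commutator identity. Writing $\lambda = \La b\Ra_I$ and using the multilinearity of $T_\epsilon^{\vec\alpha}$ to pull the constant $\lambda$ through its $i$-th slot, one checks that for every constant $\lambda$,
$$[b,T_\epsilon^{\vec\alpha}]_i(\vec f) = (b-\lambda)\,T_\epsilon^{\vec\alpha}(\vec f) - T_\epsilon^{\vec\alpha}\bigl(M_{b-\lambda}^i(\vec f)\bigr).$$
This is the standard device that lets me exploit that $b - \La b\Ra_I$ has small mean oscillation on $I$. Next I would split each $f_j = f_j\mathsf{1}_I + f_j(1-\mathsf{1}_I)$ and invoke Lemma \ref{DC} with $g = b-\lambda$ placed in the $i$-th slot: the difference $T_\epsilon^{\vec\alpha}(M_{b-\lambda}^i(\vec f)) - T_\epsilon^{\vec\alpha}(M_{b-\lambda}^i(f_1\mathsf{1}_I,\ldots,f_m\mathsf{1}_I))$ is constant on $I$, and I take this constant to be $-c_I$. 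With this choice, on $I$ we obtain
$$[b,T_\epsilon^{\vec\alpha}]_i(\vec f) - c_I = (b-\lambda)\,T_\epsilon^{\vec\alpha}(\vec f) - T_\epsilon^{\vec\alpha}\bigl(f_1\mathsf{1}_I,\ldots,(b-\lambda)f_i\mathsf{1}_I,\ldots,f_m\mathsf{1}_I\bigr),$$
so only a \emph{product} term and a fully \emph{localized} term survive.

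The third step is to estimate these two terms. For the product term I apply H\"older with exponent $s = \gamma/\delta > 1$ (this is where the hypothesis $\gamma>\delta$ is used) to separate $\abs{b-\lambda}^\delta$ and $\abs{T_\epsilon^{\vec\alpha}(\vec f)}^\delta$; the John--Nirenberg equivalence of BMO norms gives $\bigl(\frac{1}{\abs I}\int_I \abs{b-\La b\Ra_I}^{\delta s'}\bigr)^{1/(\delta s')} \lesssim \norm{b}_{BMO^d}$, while the remaining factor is exactly $\le M_\gamma(T_\epsilon^{\vec\alpha}(\vec f))(x)$ since $I\ni x$. For the localized term I reuse the Kolmogorov--rescaling argument of Lemma \ref{MF}: by \eqref{eq:Kolmogorov} with $p=\delta$, $q = 1/m$, together with the weak endpoint boundedness $T_\epsilon^{\vec\alpha}\colon L^1\times\cdots\times L^1 \to L^{1/m,\infty}$ from \cite{IJK}, this term is $\lesssim \bigl(\frac{1}{\abs I}\int_I \abs{b-\lambda}\abs{f_i}\bigr)\prod_{j\ne i}\frac{1}{\abs I}\int_I\abs{f_j}$; a further H\"older split with exponents $r,r'$ and John--Nirenberg then yields $\lesssim \norm{b}_{BMO^d}\prod_{j=1}^m \bigl(\frac{1}{\abs I}\int_I \abs{f_j}^r\bigr)^{1/r}\le \norm{b}_{BMO^d}\,\mathcal{M}_r(\vec f)(x)$. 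Summing the two bounds and taking the supremum over $I\ni x$ produces the claim.

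The part demanding the most care is the appearance of the companion term $M_\gamma(T_\epsilon^{\vec\alpha}(\vec f))$. The product term $(b-\lambda)T_\epsilon^{\vec\alpha}(\vec f)$ genuinely cannot be dominated by $\mathcal{M}_r(\vec f)$ alone, which is exactly what forces the two-parameter structure $\delta<\gamma$ and explains why the theorem carries this extra maximal function on the right. The other delicate point is the bookkeeping in Lemma \ref{DC} with the weight $g=b-\lambda$ in the $i$-th slot, so that precisely the nonlocal contributions get absorbed into $c_I$ and the local piece is left in a form to which the Kolmogorov argument applies.
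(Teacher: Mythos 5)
Your proposal is correct and follows essentially the same route as the paper: the same constant $c_I = T_\epsilon^{\vec\alpha}(M_g^i(\vec{f^0})) - T_\epsilon^{\vec\alpha}(M_g^i(\vec f))$ with $g = b - \La b\Ra_I$ justified by Lemma \ref{DC}, the same splitting into the product term (handled by H\"older and John--Nirenberg, landing on $M_\gamma(T_\epsilon^{\vec\alpha}(\vec f))$) and the localized term (handled by Kolmogorov's inequality \eqref{eq:Kolmogorov} with the weak endpoint bound $L^1\times\cdots\times L^1\to L^{1/m,\infty}$, then H\"older with $r,r'$). The only cosmetic difference is that you take the H\"older exponent $s=\gamma/\delta$ exactly while the paper picks $q\in(1,\gamma/\delta)$ and then uses $M_{\delta q}\le M_\gamma$; both are fine.
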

\begin{proof} Fix $x \in \R$. As in the proof of Lemma \ref{MF}, it suffices to show that for every $I \in \D$ containing $x,$ there exists a constant $C_I$ such that
$$\left(\frac{1}{\abs{I}}\int_I \left\vert [b,T_\epsilon^{\vec{\alpha}}]_i(\vec{f})(t) - C_I  \right\vert^\delta dt  \right)^{1/\delta} \lesssim \norm{b}_{BMO^d} \left(\mathcal{M}_r(\vec{f})(x) + M_\gamma \left(T_\epsilon^{\vec{\alpha}}(\vec{f}\,) \right)(x)\right).$$
Fix $I \in \D$ containing $x,$ and take $C_I = T_\epsilon^{\vec{\alpha}} \left(M_g^i(\vec{f^0})\right)(t) - T_\epsilon^{\vec{\alpha}} \left(M_g^i(\vec{f})\right)(t)$, where $g=b-\La b \Ra_I$ and $\vec{f^0} = (f_1^0,\ldots,f_m^0)$ with $f_i^0 = f_i \mathsf{1}_I.$ Lemma \ref{DC} shows that this is indeed a constant on $I.$ Since $T_\epsilon^{\vec{\alpha}}$ is multilinear,  
\begin{eqnarray*} [b,T_\epsilon^{\vec{\alpha}}]_i(\vec{f})(t) 
&=& b(t)\,T_\epsilon^{\vec{\alpha}}(\vec{f})(t) - T_\epsilon^{\vec{\alpha}}(f_1, \ldots, bf_i,\ldots,f_m)(t)\\
&=& \left(b(t) -\La b \Ra_I \right)\,T_\epsilon^{\vec{\alpha}}(\vec{f})(t) - T_\epsilon^{\vec{\alpha}}(f_1, \ldots, (b -\La b \Ra_I) f_i,\ldots,f_m)(t)\\
&=& \left(b(t) -\La b \Ra_I \right)\,T_\epsilon^{\vec{\alpha}}(\vec{f})(t) - T_\epsilon^{\vec{\alpha}}\left(M_g^i(\vec{f})\right)(t).
\end{eqnarray*}
 So,
\begin{eqnarray*}
&&\left(\frac{1}{\abs{I}}\int_I \left\vert [b,T_\epsilon^{\vec{\alpha}}]_i(\vec{f})(t) - C_I  \right\vert^\delta dt  \right)^{1/\delta}\\
&=& \left(\frac{1}{\abs{I}}\int_I \left\vert \left(b(t) -\La b \Ra_I \right)\,T_\epsilon^{\vec{\alpha}}(\vec{f})(t) - T_\epsilon^{\vec{\alpha}}\left(M_g^i(\vec{f})\right)(t) - C_I  \right\vert^\delta dt  \right)^{1/\delta}\\
&=& \left(\frac{1}{\abs{I}}\int_I \left\vert \left(b(t) -\La b \Ra_I \right)\,T_\epsilon^{\vec{\alpha}}(\vec{f})(t) - T_\epsilon^{\vec{\alpha}}\left(M_g^i(\vec{f^0})\right)(t) \right\vert^\delta dt  \right)^{1/\delta}\\
&\lesssim& \left(\frac{1}{\abs{I}}\int_I \left\vert \left(b(t) -\La b \Ra_I \right)\,T_\epsilon^{\vec{\alpha}}(\vec{f})(t) \right\vert^\delta dt  \right)^{1/\delta } + \left(\frac{1}{\abs{I}}\int_I \left\vert T_\epsilon^{\vec{\alpha}}\left(M_g^i(\vec{f^0})\right)(t)   \right\vert^\delta dt  \right)^{1/\delta }.\\
\end{eqnarray*}
Note that $ \gamma/ \delta >1.$ For any $q \in (1, \gamma/ \delta)$, H$\ddot{\text{o}}$lder's inequlity gives
\begin{eqnarray*}
&&\left(\frac{1}{\abs{I}}\int_I \left\vert \left(b(t) -\La b \Ra_I \right)\,T_\epsilon^{\vec{\alpha}}(\vec{f})(t) \right\vert^\delta dt  \right)^{1/\delta}\\
&\leq&  \left(\frac{1}{\abs{I}}\int_I \left\vert \left(b(t) -\La b \Ra_I \right) \right\vert^{\delta q'} dt  \right)^{1/\delta q'}\left(\frac{1}{\abs{I}}\int_I \left\vert T_\epsilon^{\vec{\alpha}}(\vec{f})(t) \right\vert^{\delta q} dt  \right)^{1/\delta q}\\
&\lesssim & \norm{b}_{BMO^d}\, M_{\delta q}\left(T_\epsilon^{\vec{\alpha}}(\vec{f})\right)(x)\\
&\leq& \norm{b}_{BMO^d}\, M_{\gamma}\left(T_\epsilon^{\vec{\alpha}}(\vec{f})\right)(x).
\end{eqnarray*}
As in the proof of Lemma \ref{MF}, we can apply Kolmogorov's inequality to obtain
\begin{eqnarray*}
&&\left(\frac{1}{\abs{I}}\int_I \left\vert T_\epsilon^{\vec{\alpha}}(f_1^0, \ldots, (b -\La b \Ra_I) f_i^0,\ldots,f_m^0)(t) \right\vert^\delta dt  \right)^{1/\delta}\\
&\leq&  \left\Vert T_\epsilon^{\vec{\alpha}}(f_1^0, \ldots, (b -\La b \Ra_I) f_i^0,\ldots,f_m^0)(t) \right\Vert _{L^{\frac{1}{m},\infty\left(I, \frac{dt}{\abs{I}}\right)}}\\
&\leq& \frac{1}{\abs{I}}\int_I \left\vert \left(b(t) -\La b \Ra_I \right)\,f_i^0(t) \right\vert dt \prod_{j=1, j \neq i}^m \frac{1}{\abs{I}}\int_I \left\vert f_j^0(t) \right\vert dt \\
&\leq& \left(\frac{1}{\abs{I}}\int_I \left\vert b(t) -\La b \Ra_I \right\vert ^{r'}dt \right)^{1/r'}\left(\frac{1}{\abs{I}}\int_I \left\vert f_i^0(t)\right\vert^r dt\right)^{1/r} \prod_{j=1, j \neq i}^m \left(\frac{1}{\abs{I}}\int_I \left\vert f_j^0(t) \right\vert^r dt\right)^{1/r} \\
&\lesssim& \norm{b}_{BMO^d} \prod_{j=1}^m \left(\frac{1}{\abs{I}}\int_I \left\vert f_j(t) \right\vert^r dt\right)^{1/r} \\
&\leq& \norm{b}_{BMO^d} \, \mathcal{M}_r(\vec{f})(x).
\end{eqnarray*}
We thus have $$M_\delta^\#\left( [b,T_\epsilon^{\vec{\alpha}}]_i(\vec{f})\right)(x) \lesssim \norm{b}_{BMO^d} \left(\mathcal{M}_r(\vec{f})(x) + M_\gamma \left(T_\epsilon^{\vec{\alpha}}(\vec{f}\,) \right)(x)\right)
.$$
\end{proof}

\noindent
Lemma \ref{MFD} is also true for the commutators of the multilinear Haar multipliers with a bounded function $b.$\\

\begin{lm}\label{MFDC}
Let $w \in A_\infty^d$ and $\vec{f} = (f_1, \ldots,f_m)$ where each $f_i$ is bounded and has compact support. If $\left\Vert \mathcal{M}(\vec{f\,})\right\Vert_{L^p(w)} < \infty$ for some $p > 0,$ and $b$ bounded, then there exists a $\delta \in (0, 1/m)$ such that $\left\Vert {M}_\delta\left([b,T_\epsilon^{\vec{\alpha}}]_i(\vec{f\,})\right)\right\Vert_{L^p(w)} < \infty.$ 
\end{lm}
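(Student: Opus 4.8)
The plan is to reduce the statement to Lemma \ref{MFD} via the defining decomposition of the commutator. By definition,
\[
[b,T_\epsilon^{\vec{\alpha}}]_i(\vec{f}) = b\, T_\epsilon^{\vec{\alpha}}(\vec{f}) - T_\epsilon^{\vec{\alpha}}\big(f_1,\ldots,bf_i,\ldots,f_m\big),
\]
so it is enough to show that each of the two terms on the right has finite $L^p(w)$ norm after applying $M_\delta$, for a suitable $\delta \in (0,1/m)$. The key observation is that both terms are built from the Haar multiplier $T_\epsilon^{\vec{\alpha}}$ applied to $m$-tuples of bounded, compactly supported functions: for the first term the tuple is $\vec{f}$ itself, and for the second it is $(f_1,\ldots,bf_i,\ldots,f_m)$, which is again admissible for Lemma \ref{MFD} because $bf_i$ is bounded with compact support whenever $b$ and $f_i$ are.

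First I would check that the hypothesis on the multilinear maximal function transfers to the modified tuple. Since $|bf_i| \le \norm{b}_\infty |f_i|$ pointwise, one has $\mathcal{M}(f_1,\ldots,bf_i,\ldots,f_m) \le \norm{b}_\infty \mathcal{M}(\vec{f}\,)$, and hence $\norm{\mathcal{M}(f_1,\ldots,bf_i,\ldots,f_m)}_{L^p(w)} \le \norm{b}_\infty \norm{\mathcal{M}(\vec{f}\,)}_{L^p(w)} < \infty$. Applying Lemma \ref{MFD} to $T_\epsilon^{\vec{\alpha}}$ with the tuple $\vec{f}$ yields a $\delta_1 \in (0,1/m)$ with $\norm{M_{\delta_1}(T_\epsilon^{\vec{\alpha}}(\vec{f}\,))}_{L^p(w)} < \infty$; applying it with the tuple $(f_1,\ldots,bf_i,\ldots,f_m)$ yields a $\delta_2 \in (0,1/m)$ with $\norm{M_{\delta_2}(T_\epsilon^{\vec{\alpha}}(f_1,\ldots,bf_i,\ldots,f_m))}_{L^p(w)} < \infty$.

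To combine the two estimates I would pass to the common exponent $\delta := \min(\delta_1,\delta_2) \in (0,1/m)$, using the monotonicity $M_\delta g \le M_{\delta'} g$ for $\delta \le \delta'$ recorded in Section 2 to preserve finiteness of both norms at level $\delta$. Because $b$ is bounded, $M_\delta\big(b\, T_\epsilon^{\vec{\alpha}}(\vec{f}\,)\big) \le \norm{b}_\infty M_\delta\big(T_\epsilon^{\vec{\alpha}}(\vec{f}\,)\big)$, so the first term is controlled. Finally, since $0 < \delta < 1$, the elementary inequality $|F+G|^\delta \le |F|^\delta + |G|^\delta$ together with the sublinearity of $M$ gives the pointwise bound
\[
M_\delta\big([b,T_\epsilon^{\vec{\alpha}}]_i(\vec{f}\,)\big)^\delta \le \norm{b}_\infty^\delta\, M_\delta\big(T_\epsilon^{\vec{\alpha}}(\vec{f}\,)\big)^\delta + M_\delta\big(T_\epsilon^{\vec{\alpha}}(f_1,\ldots,bf_i,\ldots,f_m)\big)^\delta ;
\]
raising to the power $p/\delta$ and integrating against $w$ (using $(a+b)^s \lesssim a^s + b^s$) then yields $\norm{M_\delta([b,T_\epsilon^{\vec{\alpha}}]_i(\vec{f}\,))}_{L^p(w)} < \infty$, as desired.

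The argument is essentially bookkeeping once Lemma \ref{MFD} is in hand, so I do not anticipate a serious obstacle. The only points needing attention are the reconciliation of the two a priori distinct exponents $\delta_1$ and $\delta_2$, handled by the monotonicity of $M_\delta$ in $\delta$, and the observation that the mere boundedness of $b$ — rather than $b \in BMO^d$ — is precisely what permits pulling the factor $\norm{b}_\infty$ out of both $M_\delta$ and $\mathcal{M}$.
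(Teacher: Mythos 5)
Your proposal is correct, but it takes a genuinely different route from the paper's. The paper proves the lemma from scratch by re-running the machinery of Lemma \ref{MFD} on the commutator itself: after the reduction $\left\Vert M_\delta\left([b,T_\epsilon^{\vec{\alpha}}]_i(\vec{f}\,)\right)\right\Vert_{L^p(w)} \leq \left\Vert [b,T_\epsilon^{\vec{\alpha}}]_i(\vec{f}\,)\right\Vert_{L^p(w)}$, it splits $\mathbb{R}$ into a dyadic neighborhood $S$ of the common support and its complement; on $S$ it combines local $L^q$-integrability of $w$ (via H\"older) with the \emph{unweighted} boundedness $[b,T_\epsilon^{\vec{\alpha}}]_i: L^{mpq'}\times\cdots\times L^{mpq'}\rightarrow L^{pq'}$ from \cite{IJK} --- which forces the side remark that $b$ may be replaced by $b\mathsf{1}_S$ so that $b$ lands in some $L^p$ space, as the unweighted commutator theory requires --- and off $S$ it establishes the pointwise bound $\left\vert [b,T_\epsilon^{\vec{\alpha}}]_i(\vec{f}\,)(x)\right\vert \lesssim \left(\sup_x \abs{b(x)}\right)\norm{\epsilon}_\infty\, \mathcal{M}(\vec{f}\,)(x)$ by a direct Haar-expansion computation exploiting the geometry of $I_x$. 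You instead treat Lemma \ref{MFD} as a black box and apply it twice, to the tuples $\vec{f}$ and $(f_1,\ldots,bf_i,\ldots,f_m)$ --- both admissible since $bf_i$ is bounded with compact support and $\mathcal{M}(f_1,\ldots,bf_i,\ldots,f_m) \leq \norm{b}_\infty \mathcal{M}(\vec{f}\,)$ transfers the hypothesis --- then reconcile the two exponents via $\delta = \min(\delta_1,\delta_2)$ and the monotonicity of $M_\delta$ in $\delta$ from Section 2, and glue the pieces with $\abs{F+G}^\delta \leq \abs{F}^\delta + \abs{G}^\delta$ and $M_\delta(b\,g) \leq \norm{b}_\infty M_\delta(g)$; all of these steps check out. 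Your argument is more modular and arguably cleaner: it needs the unweighted theory only for $T_\epsilon^{\vec{\alpha}}$ itself (already packaged inside Lemma \ref{MFD}), not for the commutator, and so it sidesteps both the support-splitting and the integrability caveat about $b$. What the paper's self-contained computation buys in exchange is the explicit pointwise domination of the commutator by $\mathcal{M}(\vec{f}\,)$ off the support, a quantitative estimate in the spirit of Lemma \ref{MFD} that is independently informative; but for the bare finiteness statement being proved, your reduction is fully rigorous and shorter.
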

\begin{proof}
Since each $f_i$ has compact support, there exist dyadic intervals $S' = [0, 2^{-k})$ and $S'' = [-2^{-k}, 0)$ such that the support of every $f_i$ is contained in $S=S'\cup S''.$\\
Following the arguments used in the proof of Lemma \ref{MFD}, we get
$$\left\Vert {M}_\delta\left([b,T_\epsilon^{\vec{\alpha}}]_i(\vec{f\,})\right)\right\Vert_{L^p(w)} \leq \left\Vert [b,T_\epsilon^{\vec{\alpha}}]_i(\vec{f\,})\right\Vert_{L^p(w)}.$$
So, it suffices to prove that
$$\left\Vert [b,T_\epsilon^{\vec{\alpha}}]_i(\vec{f\,})\right\Vert_{L^p(S,w)} < \infty \;\text{ and }\; \left\Vert [b,T_\epsilon^{\vec{\alpha}}]_i(\vec{f\,})
\right\Vert_{L^p(\mathbb{R}\backslash S, w)} < \infty.$$
Since $w \in A_\infty^d,$ $w^{1+\gamma} \in L^1_{loc}$ for sufficiently small $\gamma$, (see \cite{Per}  or \cite{LGM}). In particular,  $w \in L^q(S)$ for $q:= 1+\gamma.$ We can choose $\gamma$ small enough so that $w \in L^q(S)$ and $q'p > 1.$ Then by H$\ddot{\text{o}}$lder's inequality, we have
\begin{eqnarray*}
\left\Vert [b,T_\epsilon^{\vec{\alpha}}]_i(\vec{f})\right\Vert_{L^p(S,w)} &=& \left(\int_S \left\vert [b,T_\epsilon^{\vec{\alpha}}]_i(\vec{f})\right\vert^p  w dx\right)^{1/p}\\
&\leq& \left(\left(\int_S \left\vert [b,T_\epsilon^{\vec{\alpha}}]_i(\vec{f})\right\vert^{pq'}  dx\right)^{1/q'} \left(\int_S w^q dx\right)^{1/q}\right)^{1/p}\\
&<& \infty.
\end{eqnarray*}
Here, $\displaystyle\int_S w^q dx < \infty$ because $w \in L^q_{loc}$, and the finiteness of $\displaystyle\int_S \left\vert [b,T_\epsilon^{\vec{\alpha}}]_i(\vec{f})\right\vert^{pq'}  dx$ follows from boundedness of $[b,T_\epsilon^{\vec{\alpha}}]_i: L^{mpq'}\times \cdots \times  L^{mpq'} \rightarrow  L^{pq'},$ and the fact that each $f_i$ (being bounded with compact support) is in $ L^{mpq'}.$  For the unweighted theory of the commutators of multilinear Haar multipliers we refer to \cite{IJK}. Note that to prove finiteness of $\left\Vert [b,T_\epsilon^{\vec{\alpha}}]_i(\vec{f})\right\Vert_{L^p(S,w)}$ we may assume that the BMO function $b$ is in some $L^p$ space with $1<p<\infty.$ Indeed, for all $x \in S,$
$$ [b,T_\epsilon^{\vec{\alpha}}]_i(\vec{f})(x) =  [b\mathsf{1}_S,T_\epsilon^{\vec{\alpha}}]_i(\vec{f})(x),$$
for all $\vec{f} = (f_1,\ldots,f_m)$ with $f_i$ supported in $S.$\\

\noindent Now to prove $\left\Vert [b,T_\epsilon^{\vec{\alpha}}]_i(\vec{f})\right\Vert_{L^p(\mathbb{R}\backslash S, w)} < \infty,$ it suffices to show that for every $x \in \mathbb{R}\backslash S,$
$$\left\vert[b,T_\epsilon^{\vec{\alpha}}]_i(\vec{f})(x)\right\vert  \leq \mathcal{M}(\vec{f})(x).$$

Fix $x \in \mathbb{R}\backslash S.$ For definiteness, assume that $x >0,$ and let $I_x$ be the smallest dyadic interval that contains $x$ and the interval $S'.$  Note that if $x \notin I, h_I(x) = 0$ and, if $x \in I$ with $I \cap S' = \emptyset, f_j(I,\alpha_j) = 0$ for each $j.$ So,
\begin{eqnarray*}
&&\left\vert [b,T_\epsilon^{\vec{\alpha}}]_i(\vec{f})(x) \right\vert\\
&\leq& \left\vert b(x)\right\vert \left\vert T_\epsilon^{\vec{\alpha}}(f_1,f_2,\ldots,f_m)(x)\right\vert + \left\vert T_\epsilon^{\vec{\alpha}}(f_1, \ldots, bf_i,\ldots,f_m)(x)\right\vert\\ 
&=& \left\vert b(x)\right\vert \left\vert \sum_{I \supseteq I_x} \epsilon_I \prod_{j=1}^m f_j(I, \alpha_j) \; h_I^{\sigma(\vec{\alpha})}(x) \right\vert + \left\vert \sum_{I \supseteq I_x} \epsilon_I (bf_i)(I,\alpha_i)\prod_{\substack{j=1\\ j\neq i}}^m f_j(I, \alpha_j) \; h_I^{\sigma(\vec{\alpha})}(x) \right\vert\\
&\leq& \abs{b(x)} \sum_{I \supseteq I_x} \abs{\epsilon_I} \left(\prod_{j:\alpha_j=0} \frac{\left\vert\widehat{f_j}(I)\right\vert}{\sqrt{\abs{I}}}\right) \left(\prod_{j:\alpha_j=1} \left\vert\La f_j \Ra_I\right\vert\right) \; \mathsf{1}_I(x)\\
&& \quad \quad+ \abs{b(x)} \sum_{I \supseteq I_x} \abs{\epsilon_I} \abs{(bf_i)(I,\alpha_i)} \left(\prod_{\substack{j:\alpha_j=0 \\ j\neq i}} \frac{\left\vert\widehat{f_j}(I)\right\vert}{\sqrt{\abs{I}}}\right) \left(\prod_{\substack{j:\alpha_j=1\\ j\neq i}} \left\vert\La f_j \Ra_I\right\vert\right) \; \mathsf{1}_I(x)\\
\end{eqnarray*}

\noindent
We have $\displaystyle \frac{\left\vert\widehat{f_j}(I)\right\vert}{\sqrt{\abs{I}}} = \frac{1}{\sqrt{\abs{I}}} \left\vert \int f_jh_I \right\vert \leq \frac{1}{\sqrt{\abs{I}}}  \int \left\vert f_j\right\vert \frac{\mathsf{1}_I}{\sqrt{\abs{I}}} = \frac{1}{\abs{I}}\int_I \abs{f_j} = \La \abs{f_j} \Ra_I.$ Since $f_j$ is 0 on $\mathbb{R} \backslash S,$
$\displaystyle \La \abs{f_j} \Ra_{I^1} = \frac{\La \abs{f_j} \Ra_I}{2}$ whenever $I^1$ is the parent of $I$ with $I_x \subseteq I.$  Moreover, $\left\vert (bf_i)(I,\alpha_i)\right\vert \leq \abs{b} \La \abs{f_i}\Ra_I$. So,

\begin{eqnarray*}
&&\left\vert [b,T_\epsilon^{\vec{\alpha}}]_i(\vec{f})(x) \right\vert\\
&\leq& 2 \left(\sup_{x\in \R}\abs{b(x)}\right) \left( \sup_{I\in \D} \abs{\epsilon_I}\right)  \sum_{I \supseteq I_x}  \left(\prod_{j:\alpha_j=0} \frac{\left\vert\widehat{f_j}(I)\right\vert}{\sqrt{\abs{I}}}\right) \left(\prod_{j:\alpha_j=1} \left\vert\La f_j \Ra_I\right\vert\right).\\
&\leq& 2 \left(\sup_{x\in \R}\abs{b(x)}\right) \left( \sup_{I\in \D} \abs{\epsilon_I}\right)\sum_{I \supseteq I_x}   \prod_{j=1}^m \La \abs{f_j} \Ra_I \\
&=& 2 \left(\sup_{x\in \R}\abs{b(x)}\right) \left( \sup_{I\in \D} \abs{\epsilon_I}\right) \left( \prod_{j=1}^m \La \abs{f_j} \Ra_{I_x} + \frac{1}{2^m}\prod_{j=1}^m \La \abs{f_j} \Ra_{I_x} + \frac{1}{2^{2m}}\prod_{j=1}^m \La \abs{f_j} \Ra_{I_x}+ \cdots \right)\\
&=& 2 \left(\frac{2^m}{2^m -1}\right) \left(\sup_{x\in \R}\abs{b(x)}\right) \left( \sup_{I\in \D} \abs{\epsilon_I}\right) \prod_{j=1}^m \La \abs{f_j} \Ra_{I_x}\\
&\leq&  \frac{2^{m+1}}{(2^m -1)} \left(\sup_{x\in \R}\abs{b(x)}\right) \left( \sup_{I\in \D} \abs{\epsilon_I}\right)\mathcal{M}(\vec{f\,})(x).
\end{eqnarray*}
The same proof works for $x <0$ with $I_x$ the smallest dyadic interval that contains both $x$ and the interval $S''.$ \\
\end{proof}
\begin{thm}\label{MTC}
Let $\vec{\alpha} \in U_m $ and $\epsilon =(\epsilon_I)_{I\in \D}$ be bounded. Suppose $b\in BMO^d$ and $\vec{w} = (w_1,\ldots,w_m) \in A_{\vec{P}}^d$ for $\vec{P}=(p_1, \ldots,p_m)$ with $ \frac{1}{p_1}+\dots+ \frac{1}{p_m} = \frac{1}{p}$ and $1 < p_1, \ldots,p_m < \infty.$ Then there exists a constant $C$ such that
\begin{equation} \label{EC}\left\Vert{[b,T_\epsilon^{\vec{\alpha}}]_i(\vec{f})}\right\Vert_{L^{p}(\nu_{\vec{w}})} \leq C \norm{b}_{BMO^d} \prod_{j=1}^m \norm{f_j}_{L^{p_j}(w_j)}.\end{equation}
\end{thm}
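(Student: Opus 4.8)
The plan is to run the sharp maximal function machinery exactly as in the proof of Theorem \ref{MT}, but now feeding it the commutator sharp maximal estimate of Theorem \ref{MFC} in place of Lemma \ref{MF}. By the density of simple functions in each $L^{p_j}(w_j)$ and the multilinearity of $[b,T_\epsilon^{\vec{\alpha}}]_i$ in the arguments $f_j$, it suffices to prove \eqref{EC} for $\vec{f}=(f_1,\dots,f_m)$ with each $f_j$ simple (hence bounded with compact support). Since $\vec{w}\in A_{\vec{P}}^d$, we have $\nu_{\vec{w}}\in A_\infty^d$, in fact $\nu_{\vec{w}}\in A_{mp}^d$, and the maximal bound for $\mathcal{M}$ gives $\norm{\mathcal{M}(\vec{f})}_{L^p(\nu_{\vec{w}})}<\infty$. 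First I would treat the case of bounded $b$, so that Lemma \ref{MFDC} applies and produces a $\delta\in(0,1/m)$ with $\norm{M_\delta([b,T_\epsilon^{\vec{\alpha}}]_i(\vec{f}))}_{L^p(\nu_{\vec{w}})}<\infty$; this finiteness is precisely the hypothesis needed to invoke the Fefferman--Stein inequality \eqref{FSS}.

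With this $\delta$ fixed, choose $\gamma\in(\delta,1/m)$ and, using \eqref{Mr}, an $r>1$ with $\vec{w}\in A_{\vec{P}/r}^d$. Then the pointwise control $\abs{g}\le M_\delta g$, the Fefferman--Stein inequality \eqref{FSS} (legitimate by the finiteness above), and Theorem \ref{MFC} give
\[
\norm{[b,T_\epsilon^{\vec{\alpha}}]_i(\vec{f})}_{L^p(\nu_{\vec{w}})}\le \norm{M_\delta([b,T_\epsilon^{\vec{\alpha}}]_i(\vec{f}))}_{L^p(\nu_{\vec{w}})}\lesssim \norm{M_\delta^\#([b,T_\epsilon^{\vec{\alpha}}]_i(\vec{f}))}_{L^p(\nu_{\vec{w}})}
\]
\[
\lesssim \norm{b}_{BMO^d}\left(\norm{\mathcal{M}_r(\vec{f})}_{L^p(\nu_{\vec{w}})}+\norm{M_\gamma\bigl(T_\epsilon^{\vec{\alpha}}(\vec{f})\bigr)}_{L^p(\nu_{\vec{w}})}\right).
\]
It remains to bound the two terms by $\prod_{j=1}^m\norm{f_j}_{L^{p_j}(w_j)}$. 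The first is exactly the content of \eqref{Mr}. For the second, since $\gamma<1/m$ we have $p/\gamma>mp$, so $\nu_{\vec{w}}\in A_{mp}^d\subseteq A_{p/\gamma}^d$; writing $M_\gamma g=M(\abs{g}^\gamma)^{1/\gamma}$ and using the $L^{p/\gamma}(\nu_{\vec{w}})$-boundedness of $M$ shows $\norm{M_\gamma(T_\epsilon^{\vec{\alpha}}(\vec{f}))}_{L^p(\nu_{\vec{w}})}\lesssim \norm{T_\epsilon^{\vec{\alpha}}(\vec{f})}_{L^p(\nu_{\vec{w}})}$, which is $\lesssim\prod_{j=1}^m\norm{f_j}_{L^{p_j}(w_j)}$ by the weighted boundedness theorem for $T_\epsilon^{\vec{\alpha}}$ established above. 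Combining the two estimates yields \eqref{EC} for bounded $b$, with constant a fixed multiple of $\norm{b}_{BMO^d}$.

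Finally I would remove the boundedness assumption on $b$ by a truncation/Fatou argument: for $b\in BMO^d$ set $b_N=\max(-N,\min(N,b))$, so that $b_N$ is bounded, $\norm{b_N}_{BMO^d}\le 2\norm{b}_{BMO^d}$ (truncation is $1$-Lipschitz), and $b_N\to b$ a.e. For simple $\vec{f}$ one checks $[b_N,T_\epsilon^{\vec{\alpha}}]_i(\vec{f})\to[b,T_\epsilon^{\vec{\alpha}}]_i(\vec{f})$ a.e., since the Haar coefficients and averages of $b_Nf_i$ converge to those of $bf_i$ by dominated convergence; Fatou's lemma then upgrades the uniform bound $\norm{[b_N,T_\epsilon^{\vec{\alpha}}]_i(\vec{f})}_{L^p(\nu_{\vec{w}})}\le 2C\norm{b}_{BMO^d}\prod_j\norm{f_j}_{L^{p_j}(w_j)}$ to the same estimate for $b$. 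I expect the main obstacle to be the finiteness issue: Theorem \ref{MFC} is only useful once the left-hand side of \eqref{FSS} is known to be finite, which Lemma \ref{MFDC} guarantees only for bounded $b$. This forces the two-stage structure (bounded $b$ first, then approximation), and it requires care to coordinate the parameters $0<\delta<\gamma<1/m$ and $r>1$ so that Lemma \ref{MFDC}, Theorem \ref{MFC}, the $A_{p/\gamma}^d$-boundedness of $M$, and \eqref{Mr} are simultaneously in force.
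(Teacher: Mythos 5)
Your proposal follows the paper's proof in its overall architecture: both arguments proceed in two stages (bounded $b$ first, then truncation plus Fatou), both reduce to simple $\vec{f}$ by density, both invoke Lemma \ref{MFDC} to secure the finiteness hypothesis for the Fefferman--Stein inequality \eqref{FSS}, and both then feed in Theorem \ref{MFC} and conclude via \eqref{Mr}. Where you genuinely diverge is in handling the term $\norm{M_\gamma(T_\epsilon^{\vec{\alpha}}(\vec{f}\,))}_{L^p(\nu_{\vec{w}})}$: the paper applies Fefferman--Stein a \emph{second} time, controlling $M_\gamma^\#(T_\epsilon^{\vec{\alpha}}(\vec{f}\,))$ by $\mathcal{M}(\vec{f}\,)\leq \mathcal{M}_r(\vec{f}\,)$ via Lemma \ref{MF}, which forces it to verify finiteness of $\norm{M_\gamma(T_\epsilon^{\vec{\alpha}}(\vec{f}\,))}_{L^p(\nu_{\vec{w}})}$ by restricting $\gamma\in(\delta,p/p_0)$; you instead use $\nu_{\vec{w}}\in A_{mp}^d\subseteq A_{p/\gamma}^d$ (since $\gamma<1/m$ gives $p/\gamma>mp>1$) and the $L^{p/\gamma}(\nu_{\vec{w}})$-boundedness of $M$ to get $\norm{M_\gamma(T_\epsilon^{\vec{\alpha}}(\vec{f}\,))}_{L^p(\nu_{\vec{w}})}\lesssim\norm{T_\epsilon^{\vec{\alpha}}(\vec{f}\,)}_{L^p(\nu_{\vec{w}})}$, then cite the weighted boundedness theorem for $T_\epsilon^{\vec{\alpha}}$ already proved in Section 3. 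This is a legitimate shortcut, and arguably cleaner: it produces a direct finite bound, so no second finiteness check is needed, at the cost of leaning on the (already established) weighted theorem rather than staying within the pointwise sharp-function framework.

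One step in your limiting argument is not justified as written. You claim $[b_N,T_\epsilon^{\vec{\alpha}}]_i(\vec{f}\,)\to[b,T_\epsilon^{\vec{\alpha}}]_i(\vec{f}\,)$ a.e.\ ``since the Haar coefficients and averages of $b_Nf_i$ converge to those of $bf_i$ by dominated convergence.'' Term-by-term convergence of the coefficients does not by itself allow you to interchange the limit with the infinite sum over $I\in\D$ defining $T_\epsilon^{\vec{\alpha}}(f_1,\ldots,b_Nf_i,\ldots,f_m)$; pointwise convergence of a Haar-type series is not a consequence of coefficient convergence. The paper's repair is the standard one: since $f_i$ is bounded with compact support and $\abs{b_N}\leq\abs{b}$ with $b\in BMO^d$ locally in every $L^q$, dominated convergence gives $b_Nf_i\to bf_i$ in $L^{mq}$, the unweighted boundedness $T_\epsilon^{\vec{\alpha}}:L^{mq}\times\cdots\times L^{mq}\rightarrow L^q$ gives norm convergence of $T_\epsilon^{\vec{\alpha}}(f_1,\ldots,b_Nf_i,\ldots,f_m)$, and one then extracts a subsequence converging almost everywhere before applying Fatou. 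With that substitution (and your constant $\norm{b_N}_{BMO^d}\leq 2\norm{b}_{BMO^d}$, which is fine; the paper cites $9/4$), your proof is complete.
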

\begin{proof} First assume that $b$ is bounded.\\

\noindent Since the simple functions in $L^p(\nu_{\vec{w}})$ are dense in $L^p(\nu_{\vec{w}}),$ it suffices to prove \eqref{EC} for $\vec{f} = (f_1,f_2,\ldots,f_m)$ with $f_i \in L^{p_i}(w_i)$ simple. For all such $\vec{f}$, there exists, by Lemma \ref{MFDC}, a $\delta \in (0, 1/m)$ such that $\left\Vert {M}_\delta\left([b,T_\epsilon^{\vec{\alpha}}]_i(\vec{f\,})\right)\right\Vert_{L^p(w)} < \infty$.  So, for any $r>1$ and $\gamma >\delta$ we have
\begin{eqnarray*}
\left\Vert{[b,T_\epsilon^{\vec{\alpha}}]_i(\vec{f})}\right\Vert_{L^{p}(\nu_{\vec{w}})}
 &\leq& \left\Vert{M_\delta[b,T_\epsilon^{\vec{\alpha}}]_i(\vec{f})}\right\Vert_{L^{p}(\nu_{\vec{w}})}\\
&\lesssim& \left\Vert{M_\delta^\#[b,T_\epsilon^{\vec{\alpha}}]_i(\vec{f})}\right\Vert_{L^{p}(\nu_{\vec{w}})}\\
&\lesssim& \norm{b}_{BMO^d} \left(\left\Vert{\mathcal{M}_r(\vec{f})}\right\Vert_{L^{p}(\nu_{\vec{w}})} + \left\Vert{M_\gamma \left(T_\epsilon^{\vec{\alpha}}(\vec{f}\,) \right)}\right\Vert_{L^{p}(\nu_{\vec{w}})}\right),
\end{eqnarray*}
where the first inequality follows from the pointwise control, the second one is the Fefferman-Stein inequality \eqref{FSS} and the last inequality follows from Theorem \ref{MFC}.\\

\noindent 
Now we can choose $\gamma \in (\delta, 1/m)$ such that $\left\Vert{M_\gamma \left(T_\epsilon^{\vec{\alpha}}(\vec{f}\,) \right)}\right\Vert_{L^{p}(\nu_{\vec{w}})} < \infty.$ In fact, looking at the proofs of Lemmas \ref{MFD} and \ref{MFDC}, any $\gamma \in (\delta, p/p_0)$ would work. For such $\gamma,$ we have
\begin{eqnarray*}
\left\Vert{M_\gamma \left(T_\epsilon^{\vec{\alpha}}(\vec{f}\,) \right)}\right\Vert_{L^{p}(\nu_{\vec{w}})}
&\lesssim& \left\Vert{M_\gamma^\# \left(T_\epsilon^{\vec{\alpha}}(\vec{f}\,) \right)}\right\Vert_{L^{p}(\nu_{\vec{w}})}\\
&\leq& \left\Vert{\mathcal{M} (\vec{f}\,)}\right\Vert_{L^{p}(\nu_{\vec{w}})}\\
&\leq& \left\Vert{\mathcal{M}_r (\vec{f}\,)}\right\Vert_{L^{p}(\nu_{\vec{w}})}
\end{eqnarray*}
We thus have $$\left\Vert{[b,T_\epsilon^{\vec{\alpha}}]_i(\vec{f})}\right\Vert_{L^{p}(\nu_{\vec{w}})} \lesssim \norm{b}_{BMO^d}\left\Vert{\mathcal{M}_r (\vec{f}\,)}\right\Vert_{L^{p}(\nu_{\vec{w}})}$$ for all $r>1.$\\
Finally, we can choose $r>1$ such that the inequality \eqref{Mr} holds, i.e.$$ \left\Vert{\mathcal{M}_r (\vec{f}\,)}\right\Vert_{L^{p}(\nu_{\vec{w}})} \lesssim \prod_{j=1}^m \norm{f_j}_{L^{p_j}(w_j)}.$$ This completes the proof when $b$ is bounded.\\

\noindent
\noindent Now following \cite{LOPTT}, we use a limiting argument to prove the theorem for general $b \in BMO^d.$ \\
Let $\{b_j\}$ be the sequence of functions defined by
$$ b_j(x) = \begin{cases}
       j, &\text{if }b(x) > j,\\
       b(x),  & \text{if } \abs{b(x)} \leq j,\\
			-j & \text{if } b(x) < -j.\\
            \end{cases}$$
Clearly, $b_j \rightarrow b$ pointwise, and we have $\norm{b_j}_{BMO^d} \leq c \norm{b}_{BMO^d}$ for all $j$. In fact, c = 9/4 works (see \cite{LGM}, page 129).\\
For any $q \in (1,\infty)$,  
$$ T_\epsilon^{\vec{\alpha}}(f_1, \ldots, b_jf_i,\ldots,f_m) \rightarrow T_\epsilon^{\vec{\alpha}}(f_1, \ldots, bf_i,\ldots,f_m) \quad \text{ in } L^q \text{ as } j \rightarrow \infty$$ due to boundedness of $T_\epsilon^{\vec{\alpha}}: L^{mq}\times \dots \times L^{mq} \rightarrow L^q$ and the fact that bounded functions $f_1, \ldots, f_m$ with compact support are all in $L^{mq}.$ Note that since $b_j, b \in BMO^d$ and bounded function $f_i$ has compact support $b_j f_i \rightarrow bf_i$ in $L^{mq}$ as $j\rightarrow \infty.$ Then there exists a subsequence $\{b_{j_k}\}$ such that $$ T_\epsilon^{\vec{\alpha}}(f_1, \ldots, b_{j_k}f_i,\ldots,f_m)(x) \rightarrow  T_\epsilon^{\vec{\alpha}}(f_1, \ldots, bf_i,\ldots,f_m)(x) \text{\quad  for almost every } x.$$ For such $x,$ we have 
$$ [b_{j_k}, T_\epsilon^{\vec{\alpha}}]_i(\vec{f})(x) \rightarrow  [b, T_\epsilon^{\vec{\alpha}}]_i(\vec{f})(x).$$
Now, 
\begin{eqnarray*}
\left\Vert{[b,T_\epsilon^{\vec{\alpha}}]_i(\vec{f})}\right\Vert_{L^{p}(\nu_{\vec{w}})}
&=& \left(\int_\R \left\vert [b, T_\epsilon^{\vec{\alpha}}]_i(\vec{f})(x) \right\vert^p dx \right)^{1/p}\\
&\leq& \liminf_{k \rightarrow \infty}\left(\int_\R \left\vert [b_{j_k}, T_\epsilon^{\vec{\alpha}}]_i(\vec{f})(x) \right\vert^p dx\right)^{1/p}\\
&\leq& C' \liminf_{k \rightarrow \infty} \norm{b_{j_k}}_{BMO^d} \prod_{j=1}^m \norm{f_j}_{L^{p_j}(w_j)}\\
&\leq& C \norm{b}_{BMO^d} \prod_{j=1}^m \norm{f_j}_{L^{p_j}(w_j)},\\
\end{eqnarray*}
where we have used Fatou's lemma to obtain the first inequality, and the second inequality follows from the result already proved for bounded function $b$.
\end{proof}

\noindent
\textbf{Some Remarks:}
\begin{enumerate}
	\item In \cite{IJK} we have presented the unweighted theory of the multilinear commutators with some restrictions. In that paper, we required that $b \in L^q$ for some $q\in (1,\infty)$ and that $p>1.$ As we have seen, this restricted unweighted theory was sufficient to obtain the weighted theory presented in this article. Taking $w_i = 1$ for all $1\leq i \leq m$, we see that the weighted theory implies the unweighted theory for all $b \in BMO^d$ and $ 1/m < p < \infty.$
	
	\item With the results obtained in this article, it is easy to see that the end-point results obtained in \cite{LOPTT} for the commutators of  the multilinear Calder$\acute{\text{o}}$n-Zygmund operators also hold for the commutators of the multilinear Haar multipliers. 
\end{enumerate}

\begin{bibdiv}
\begin{biblist}

\bib{BS}{book}{title={Interpolation of Operators}, author = {Bennett, C.},author = {Sharpley, R.}, year = {1988}, publisher ={Academic Press, Inc., Boston, MA},}

\bib{JD}{book}{
author={Duoandikoetxea, J.},
title={Fourier Analysis},
series={Graduate Studies in Math.},
publisher={American Mathematical Society},
edition ={2},
volume = {29},
year={2001}}

\bib{FS}{article}{title={Hp spaces of several variables}, author = {Fefferman, C.},author = {Stein, E. M.}, journal={Acta Math.}, volume = {129} year = {1972}, page ={137-193}}

\bib{LGC}{book}{
author={Grafakos, L.},
title={Classical Fourier Analysis},
series={Graduate texts in Math., 249},
publisher={Springer},
edition ={2},
number = {249},
year={2008},
address = {New York}}

\bib{LGM}{book}{
author={Grafakos, L.},
title={Modern Fourier Analysis},
series={Graduate texts in Math., 250},
publisher={Springer},
edition ={2},
year={2008},
address = {New York}}

	\bib{HR}{article}{title={Interpolation by the real method between $BMO$, $L^\alpha (0 < \alpha < \infty)$ and $H^\alpha (0 < \alpha < \infty)$}, author = {Hanks, R.}, journal={Indiana Univ. Math. J.}, volume ={26}, number ={4}, pages={679-689}, year = {1977}}

\bib{Jan}{article}{
author={Janson, S.},
title={BMO and commutators of martingale transforms},
journal={Ann. Inst. Fourier},
volume={31},
number = {1},
year={1981},
pages={265-270}}

\bib{JN}{article}{title={On functions of bounded mean oscillation}, author = {John, F.},author = {Nirenberg, L.}, journal={Comm. Pure Appl. Math.}, volume = {14} year = {1961}, page ={415–426}}

\bib{IJK}{article}{title={Multilinear dyadic operators and their commutators}, author = {Kunwar, Ishwari}, journal ={arXiv:1512.03865v1}}

\bib{LOPTT}{article}{
title={New maximal functions and multiple weights for the multilinear Calder$\acute{\text{o}}$n-Zygmund theory},
  author={Lerner, A.K.}, 
	author={Ombrosi, S.},
	author={P$\acute{\text{e}}$rez, C.},
	author={Torres, R. H.},
	author={Trujillo-Gonz$\acute{\text{a}}$lez, R.},
  journal={Adv. in Math.},
  volume={220},
  number={4},
  pages={1222--1264},
  year={2009},
  Publisher ={Wiley Online Library}}

\bib{Per}{article}{
author={Pereyra, M.C.},
title={Lecture notes on dyadic harmonic analysis},
journal={Contemy Mathematics},
volume={289},
date={2001},
pages={1-60}}

\bib{SE}{book}{title={Harmonic Analysis: Real Variable Methods, Orthogonality, and Oscillatory Integrals}, author = {Stein, E. M.}, publisher ={Princeton Univ. Press, Princeton}, year = {1993}}

\end{biblist}
\end{bibdiv}
\end{document}